\documentclass{amsart}
\usepackage{graphicx}

\usepackage{amsmath}
\usepackage{mathtools}
\usepackage
{amstext, amscd, setspace, tikz-cd,mathtools, enumerate, graphics, latexsym, siunitx}

\usepackage{pst-node}
\usepackage{tikz-cd}

\usepackage{amssymb}
\usepackage{hyperref}
\hypersetup{colorlinks=true,
    linkcolor=blue,
    filecolor=magenta,      
    urlcolor=cyan,}

\usepackage{PTSerif}
\usepackage{graphicx}

\usepackage[cmtip,all]{xy}
\newcommand{\properideal}{%
\mathrel{\ooalign{$\lneq$\cr\raise.22ex\hbox{$\lhd$}\cr}}}

\newcommand{\p}{\mathfrak{p}}
\newcommand{\D}{\mathcal{D}}
\newcommand{\m}{\mathfrak{m}}

\newcommand{\F}{\mathcal{F}}
\newcommand{\Z}{\mathbb{Z}}

\newcommand{\injdim}{\operatorname{injdim}}

\newcommand{\Ass}{\operatorname{Ass}}

\newcommand{\Hom}{\operatorname{Hom}}
\newcommand{\Mod}{\operatorname{Mod}}

\newcommand{\Ext}{\operatorname{Ext}}

\theoremstyle{plain}
\newtheorem{theorem}{Theorem}[section]
\newtheorem{lemma}[theorem]{Lemma}

\newtheorem{proposition}[theorem]{Proposition}
\newtheorem{remark}[theorem]{Remark}
\newtheorem{definition}[theorem]{Definition}

\newtheorem{example}[theorem]{Example}

\newtheorem{setup}[theorem]{Setup}

\title[On the structure theorem of graded components of $\F$-finite, $\F$-module]{On  the structure theorem of graded components of $\F$-finite, $\F$-modules over certain polynomial ring}

\author{\textsc{Sayed Sadiqul Islam}}
\address{Department of Mathematics, IIT Bombay, Powai, Mumbai 400076, India}
\email{ssislam1997@gmail.com, 22d0786@iitb.ac.in}

\subjclass[2020]{Primary 13D45; Secondary 13C11}
\date{\today}
\keywords{Multigraded local cohomology, D-modules, F-modules}

\begin{document}

\begin{abstract}
Let $K$ be a field of characteristic $p>0$, $A=K[[Y]]$ be a power series ring in one variable and $Q(A)$ be the field of fraction of $A$. Suppose that $R=A[X_1,\ldots,X_n]$ is a standard $\mathbb{N}^n$-graded polynomial ring over $A$, i.e., $\deg (A)=\underline{0}\in \mathbb{N}^n$ and $\deg(X_j)=e_j\in \mathbb{N}^n$. Assume that $M=\bigoplus_{\underline{u}\in \mathbb{Z}^n} M_{\underline{u}}$ is a $\mathbb{Z}^n$-graded $\F$-finite, $\F$-module over $R$. In this article we prove that,
    $\displaystyle M_{\underline{u}}\cong E(A/YA)^{a(\underline{u})}\oplus Q(A)^{b(\underline{u})}\oplus A^{c(\underline{u})}$
    for some finite numbers $a(\underline{u}), b(\underline{u}), c(\underline{u})\geq 0$. Let  for a subset of $U$ of $\mathcal{S}=\{1, \ldots, n\}$, define a block to be the set
$\displaystyle\mathcal{B}(U)=\{\underline{u} \in \Z^n \mid u_i \geq 0 \mbox{ if } i \in U \mbox{ and } u_i \leq -1 \mbox{ if } i \notin U \}$. Note that $\bigcup_{U\subseteq \mathcal{S}}\mathcal{B}(U)=\mathbb{Z}^n$. We  prove that the sets  $\{a(\underline{u})\mid \underline{u}\in \mathbb{Z}^n\}$, $\{b(\underline{u})\mid \underline{u}\in \mathbb{Z}^n\}$ and $\{c(\underline{u})\mid \underline{u}\in \mathbb{Z}^n\}$ are constant on $\mathcal{B}(U)$ for each subset $U$ of $\{1,\ldots,n\}$. In particular, these results holds for composition of local cohomology modules of the form $ H^{i_1}_{I_1}(H^{i_2}_{I_2}(\dots H^{i_r}_{I_r}(R)\dots)$ where $I_1,\ldots,I_r$  are $\mathbb{N}^n$-graded ideals of $R$. This provides a positive characteristic analogue of the results proved in \cite{TS-23} by the authors in characteristic zero.
\end{abstract}

\maketitle
\section{Introduction}
Since its introduction by Grothendieck in the early 1960s, local cohomology has become an indispensable tool and a central object of study in commutative algebra and algebraic geometry. Local cohomology modules are not finitely generated in general. The lack of finite generation has turned attention of researchers to study finiteness properties of local cohomology modules; see, for instance, \cite{HS-93, Lyu-93, Lyu-97, Lyu-2000}. The theories of $\D$-modules and $\F$-modules, developed by Lyubeznik in his seminal papers \cite{Lyu-93, Lyu-97} have been very useful in the study of finiteness properties of local cohomology modules in characteristic zero as well as in positive characteristic.

Let $S=\oplus_{n\geq 0}S_n$ be a standard graded Noetherian ring and $S_+$ be irrelevant ideal. It is well known  \cite[Theorem 15.1.5]{BS-13} that the graded componets  $H^i_{S_+}(M)_n$ is finitely generated over $S_0$ and $H^i_{S_+}(M)_n=0$ for all $n\gg 0$. This fact motivated Puthenpurakal to study the graded components of $H^i_I(R)$ where  $R$ is a standard graded polynomial ring $A[X_1,\ldots,X_n]$ over a regular ring  $A$  containing a field of characteristic zero and $I$ is an arbitrary homogeneous ideal of $R$ (see, \cite{TP-collect}). Later, in a series of papers \cite{TS-19,TS-24, TP-25, TP-koszul} the authors carried out a comprehensive study of graded components of local cohomology modules under various setups. The asymptotic stability of invariants associated to the bigraded components of $H^i_I(R)$ where $R=A[X_1,\ldots,X_m,Y_1,\ldots,Y_n]$ is a standard bigraded polynomial ring over a regular ring $A$ containing a field and $I$ is a bi-homogeneous ideal of $R$ are studied in \cite{BPRS-26, ST-25} depending on the characteristic of the field.

Let $A$ be a Dedekind domain of characteristic zero such that its localization at every
maximal ideal has mixed characteristic with finite residue field. Suppose that $R=A[X_1,\ldots,X_n]$ is a standard $\mathbb{N}^n$-graded polynomial ring over $A$, i.e., $\deg (A)=\underline{0}\in \mathbb{N}^n$ and $\deg(X_j)=e_j\in \mathbb{N}^n$. Let $I$ be a $\mathfrak{C}$-monomial ideal of $R$. The authors have  produced a structure theorem for the multigraded components of the local cohomology modules $M=H^i_I(R)$ for $i\geq 0$ (see, \cite[Theorem 1.3]{TS-25}). It is also proved that for a fixed $\underline{u}\in\mathbb{Z}^n$, the Bass numbers $\mu_i(\p,M_{\underline{u}})$ are finite for each prime ideal $\p$ in $A$ and for every $i\geq 0$ (see, \cite[Corollary 6.6]{TS-25}). Let  for a subset of $U$ of $\mathcal{S}=\{1, \ldots, n\}$, define a block to be the set
$\displaystyle\mathcal{B}(U)=\{\underline{u} \in \Z^n \mid u_i \geq 0 \mbox{ if } i \in U \mbox{ and } u_i \leq -1 \mbox{ if } i \notin U \}$. Note that $\bigcup_{U\subseteq \mathcal{S}}\mathcal{B}(U)=\mathbb{Z}^n$. Recently, the authors in \cite{ST-26} proved that for a fixed prime ideal $\p$ in $A$ and $i\geq 0$, the set of Bass numbers $\{\mu_i(\p,M_{\underline{u}})\mid \underline{u}\in \mathbb{Z}^n\}$ is constant 
on $\mathcal{B}(U)$ for each subset $U$ of $\{1, \ldots, n\}$ (see, \cite[Theorem 7.2]{ST-26}).

Let $K$ be a field of characteristic zero and let $R=A[X_1,\ldots,X_n]$ be a standard $\mathbb{N}^n$-graded polynomial ring over $A$ where $A=K[[Y]]$ is a formal power series ring in one variable. The authors proved an analogue of the structure theorem for components of $H^i_I(R)$ in this case (see, \cite[Theorem 7.1]{TS-23}). It is also proved that for a fixed prime ideal $\p$ in $A$ and $i\geq 0$, the set of Bass numbers $\{\mu_i(\p,M_{\underline{u}})\mid \underline{u}\in \mathbb{Z}^n\}$ is constant 
on $\mathcal{B}(U)$ for each subset $U$ of $\{1, \ldots, n\}$ (see, \cite[Theorem 5.6]{TS-23}).
\medskip

In this article, we mainly establish the results proved in \cite{TS-23} to the setting of positive characteristic. The techniques and results from \cite{TP-koszul} will be used extensively in our proofs. Now we discuss the results proved in this paper. Consider the following setup.
\begin{setup}\label{main setup intro}
  Let $A$ be a regular ring containing a field of characteristic $p>0$. Suppose that $R=A[X_1,\ldots,X_n]$ is a standard $\mathbb{N}^n$-graded polynomial ring over $A$, i.e., $\deg (A)=\underline{0}\in \mathbb{N}^n$ and $\deg(X_j)=e_j\in \mathbb{N}^n$.   
\end{setup}
Let the hypothesis be as in \ref{main setup intro}. The following result  proves that the vanishing of almost graded components of $M$ implies vanishing of $M$.
\medskip

\noindent
\textbf{Theorem A.} (Theorem \ref{proof of vanishing theorem})
\phantomsection \label{Theorem A}
\textit{Let $R$ be as in Setup \ref{main setup intro}. Assume that $M=\bigoplus_{\underline{u}\in \mathbb{Z}^n} M_{\underline{u}}$ is a $\mathbb{Z}^n$-graded $\F$-finite, $\F$-module over $R$. If $M_{\underline{u}}=0$ for all $|u_i|\gg 0 $, then $M=0$.
}
\medskip

Next, we prove rigidity results concerning the graded components of $M$.
\medskip

\noindent
\textbf{Theorem B.} (Theorem \ref{proof of rigidity `t`heorem})
\phantomsection \label{Theorem B}
\textit{Let $R$ be as in Setup \ref{main setup intro}. Assume that $M=\bigoplus_{\underline{u}\in \mathbb{Z}^n} M_{\underline{u}}$ is a $\mathbb{Z}^n$-graded $\F$-finite, $\F$-module over $R$. Then, the following are equivalent:
     \begin{enumerate}[\rm (i)]
         \item $M_{\underline{u}}\neq 0$ for all $\underline{u}\in \mathcal{B}(U)$;
         \item $M_{\underline{w}}\neq 0$ for some $\underline{w}\in \mathcal{B}(U)$.
     \end{enumerate}
}
\medskip
 In the next theorem, we show that the set of associated primes, Bass numbers,  injective dimensions, and the support dimensions of the graded components are constant on $\mathcal{B}(U)$ for each subset $U$ of $\{1,\ldots,n\}$.
 \medskip
 
\noindent
\textbf{Theorem C.} (Theorem \ref{bass numbers,injective dimension, associted primes are constant in block})
\phantomsection \label{Theorem C}
\textit{Assume the hypothesis as in  \ref{main setup intro}. Let $\mathcal{B}(U)$ denote a block corresponding to a subset $U$ of $\{1,\ldots,n\}$.
\begin{enumerate}[\rm (i)]
    \item Fix a  prime ideal $\p$ of $A$ and  $i\geq 0$. Then, the set of Bass numbers $\{\mu_i(\p,M_{\underline{u}})\mid \underline{u}\in \mathbb{Z}^n\}$ is constant 
on $\mathcal{B}(U)$.
\item The sets $\{\injdim M_{\underline{u}}\mid \underline{u}\in \mathbb{Z}^n\}$, $\{\dim M_{\underline{u}}\mid \underline{u}\in \mathbb{Z}^n\}$ and $\{\Ass_A M_{\underline{u}}\mid \underline{u}\in \mathbb{Z}^n\}$ are constant 
on $\mathcal{B}(U)$
\end{enumerate}
}
\medskip
We prove an
 structure theorem similar to mixed characteristic case (see, \cite[Theorem 1.3]{TS-25}) in positive characteristic where $A = K[[Y ]]$. When  $\operatorname{char} K=0$, the result is proved for $H^i_I(R)$ where $I$ is a $\mathfrak{C}$-monomial ideal of $R$ (\cite[Theorem 7.1]{TS-23}).  We note that unlike the mixed characteristic case, the torsion part in this case does not have any finitely generated summand. More precisely, we prove the following.
\medskip

\noindent
\textbf{Theorem D.} (Theorem \ref{structure theorem})
\phantomsection \label{Theorem D}
\textit{ Let $K$ be field of characteristic $p>0$, $A=K[[Y]]$ be a power series ring in one variable and $Q(A)$ be the field of fraction of $A$. Suppose that $R=A[X_1,\ldots,X_n]$ is a standard $\mathbb{N}^n$-graded polynomial ring over $A$, i.e., $\deg (A)=\underline{0}\in \mathbb{N}^n$ and $\deg(X_j)=e_j\in \mathbb{N}^n$. Assume that $M=\bigoplus_{\underline{u}\in \mathbb{Z}^n} M_{\underline{u}}$ is a $\mathbb{Z}^n$-graded $\F$-finite, $\F$-module over $R$. Then,
    $$M_{\underline{u}}\cong E(A/YA)^{a(\underline{u})}\oplus Q(A)^{b(\underline{u})}\oplus A^{c(\underline{u})}$$
    for finite numbers $a(\underline{u}), b(\underline{u}), c(\underline{u})\geq 0$.
}
\medskip

In the next theorem, we study the behaviour of the numbers appearing in the above structure theorem in each block $\mathcal{B}(U)$ corresponding to a subset $U$ of $\{1,\ldots,n\}$. Similar to characteristic zero case \cite[Remark 7.2]{TS-23}, we prove the following.

\medskip
\noindent
\textbf{Theorem E.} (Theorem \ref{numbers are constant})
\phantomsection \label{Theorem E}
\textit{ Let the hypothesis be as in  \hyperref[Theorem D]{Theorem D}. Then,the sets $\{a(\underline{u})\mid \underline{u}\in \mathbb{Z}^n\}$, $\{b(\underline{u})\mid \underline{u}\in \mathbb{Z}^n\}$ and $\{c(\underline{u})\mid \underline{u}\in \mathbb{Z}^n\}$ are constant 
on $\mathcal{B}(U)$ for each subset $U$ of $\{1, \ldots, n\}$.
}
\medskip

We now describe in brief the contents of this paper. The paper is organized in six sections. Section 2 introduces the necessary preliminaries and the  supporting results we need in this paper. In Section 4, we study graded components of $H^i_I(R)$ when $R=K[X_1,\ldots,X_n]$ is a standard $\mathbb{N}^n$-graded polynomial ring over a field $K$ of positive characteristic. In section 5, we generalize the results proved in Section 4 for more general setup, more precisely we prove \hyperref[Theorem A]{Theorem A}, \hyperref[Theorem B]{Theorem B} and \hyperref[Theorem C]{Theorem C}. Finally, the last section proves  \hyperref[Theorem D]{Theorem D} and \hyperref[Theorem E]{Theorem E}.
\section{preliminaries}
Throughout the paper we assume that $p>0$ is a prime number and $R$ is a regular Noetherian ring of characteristic $p>0$.  The  $e$-th Frobenius  map $f^e:R\rightarrow R$ given $f(a)=a^{p^e}$ for all $a\in R$ is a homomorphism of rings. Let $R^e$ denote the $(R,R)$-bimodule which is $(R,+)$
as an additive group with left multiplication being the usual multiplication and right multiplication is via the Frobenius map $f^e$.
\begin{definition}
     The $e$-th Frobenius functor $\mathcal{F}^e:\Mod(R)\rightarrow \Mod(R)$ is defined as follows:$$\mathcal{F}^e(M)= R^e\otimes_R M$$ and $$\mathcal{F}^e(M\xrightarrow{f} N)=id_{R^e}\otimes_R f.$$
\end{definition}
We $e=1$, we denote $\mathcal{F}^e$ by just $\F$. Now we define $\F$-mdoule.
\begin{definition}
     An $\F_R$-module or  $\F$-module over $R$ (or simply an $\F$-module, if this causes no confusion) is an $R$-module $M$ equipped with an $R$-module isomorphism $\theta: M \rightarrow \F(M)$ which we call the structure morphism of $M$.
\end{definition}
If $M$ is a $\mathbb{Z}^n$-graded $R$-module then there is a natural $\mathbb{Z}^n$-grading on $\F_R(M)$ defined by $\deg(r'\otimes m)=\deg(r')+p\ \deg(m)$ for homogeneous elements $r'\in R$ and $m\in M$. An $\F$-module $(M,\theta)$ is called $\mathbb{Z}^n$-graded $\F$-module if $M$ is $\mathbb{Z}^n$-graded and $\theta$ is degree preserving.

Given any finitely generated $R$-module $U$ and a $R$-linear map $\beta: U \rightarrow \F(U)$ one can obtain an $R$-module

$$
M=\lim _{\longrightarrow}\left(U \xrightarrow{\beta} \F(U) \xrightarrow{\F(\beta)} \F^2(U) \xrightarrow{\F^2(\beta)} \cdots\right).
$$
Since
$$
\begin{aligned}
\mathcal{F}(M) &= \lim_{\longrightarrow}\left(\F(U) \xrightarrow{F(\beta)} \F^2(U) \xrightarrow{\F^2(\beta)} \F^3(U) \xrightarrow{\F^3(\beta)} \cdots\right) \\
     &= M
\end{aligned}$$

therefore $M \cong \mathcal{F}(M)$, and hence $M$ is an $\F$-module. 
\begin{definition}
     Any $\F$-module which can be constructed as a direct limit as $M$ above is called an $\F$-finite, $\F$-module with generating morphism $\beta$.
\end{definition}
If $M$ is $\mathbb{Z}^n$-graded and $\beta$ is degree preserving, we say that $M$ is a $\mathbb{Z}^n$-graded $\F$-finite, $\F$-module over $R$. The primary examples of $\F$-finite, $\F$-module over $R$ are composition of local cohomology modules of the form $ H^{i_1}_{I_1}(H^{i_2}_{I_2}(\dots H^{i_r}_{I_r}(R)\dots)$. In fact, if $M$ is $\F$-finite, $\F$-module over $R$, so is the local cohomology modules $H^i_I(M)$ for each ideal $I$ of $R$ and for each $i\geq 0$; see \cite[Proposition 2.10]{Lyu-97}. Let us recall some results that we need.
\begin{theorem}\cite[Theorem 1.1]{TP-koszul}\label{F-finite}
        Let $K$ be an infinite field of characteristic $p>0$. Let $R$ be one of the following rings 
        \begin{enumerate}[\rm (i)]
            \item $K[Y_1,\ldots,Y_n]$
            \item  $K[[Y_1,\ldots,Y_n]]$
            \item  $A[X_1,\ldots,X_m]$ where $A= K[[Y_1,\ldots,Y_n]].$
        \end{enumerate}
        Let $M$ be a $\F$-finite, $\F$-module module over $R$. Fix $r\geq 1$. Then, the Koszul homology modules $H_i(Y_1,\ldots,Y_r;M)$ are $\F$-finite, $\F$-modules over $\overline{R}$ where $\overline{R}=R/(Y_1,\ldots,Y_r)$ and for $i=0,\ldots,r$.
    \end{theorem}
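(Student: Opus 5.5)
The plan is induction on $r$, reducing everything to the case of a single element $Y=Y_1$. The statement is $\F$-module theoretic. First I check that $R/YR$ is again one of the rings (i)--(iii), with $n$ replaced by $n-1$, so it is regular of characteristic $p$. For $r\geq 2$, the Koszul complex on $Y_1,\ldots,Y_r$ is the mapping cone of multiplication by $Y_r$ on the Koszul complex on $Y_1,\ldots,Y_{r-1}$. This gives the standard exact sequences
\[
0\to H_0(Y_r;H_i(Y_1,\ldots,Y_{r-1};M))\to H_i(Y_1,\ldots,Y_r;M)\to H_1(Y_r;H_{i-1}(Y_1,\ldots,Y_{r-1};M))\to 0 .
\]
Since the category of $\F$-finite $\F$-modules is closed under extensions (Lyubeznik), it suffices to treat a single element, using at each step a new variable over the quotient ring.

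\textbf{Key step (one element).} Let $N$ be an $\F$-finite $\F$-module over $R$ with generating morphism $\beta\colon U\to \F(U)$, where $U$ is finitely generated. Set $\overline{R}=R/YR$. We must show that $N/YN$ and $(0:_N Y)$ are $\F_{\overline R}$-finite. Because $\F$ is exact (flatness of Frobenius, $R$ regular) and commutes with direct limits, I compute both modules as direct limits built from $U$.

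\emph{Torsion part.} We have $(0:_NY)=\varinjlim (0:_{\F^e(U)}Y)$. Using $\F_R(U)\otimes_R \overline R\cong \F_{\overline R}(U/YU)$, flatness gives
\[
(0:_{\F_R(U)}Y^{p})\cong \F_R(0:_UY)\quad\text{but not } (0:_{\F_R(U)}Y).
\]
So I instead write $N=\varinjlim_e \F^e(U)$ with $Y$ acting, and use the identity $(0:_N Y)\cong \Hom_R(R/YR,N)$. For $\F$-modules there is the isomorphism $\F_R(\Hom(R/Y,\,\cdot\,))\cong \Hom(R/Y^p,\F(\,\cdot\,))$. To pass from $Y^p$ back to $Y$, I use that $(0:_NY)\hookrightarrow(0:_NY^p)$ with $N\cong \F(N)$. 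Concretely, I give $(0:_NY)$ a generating morphism
\[
\gamma\colon (0:_UY)\to (0:_{\F(U)}Y^p)\xrightarrow{\cdot Y^{p-1}} (0:_{\F(U)}Y)\cong \F_{\overline R}(0:_UY)\ \ (\text{suitably identified}).
\]
I then check that the induced limit recovers $(0:_NY)$. This is the analogue of Lyubeznik's proof that $H^0_I$ preserves $\F$-finiteness. The cokernel part $N/YN=H_0$ is easier: $\F_R(U)/Y\F_R(U)\cong \F_{\overline R}(U/YU)$ after an analogous twist by $Y^{p-1}$, namely the map $\F_R(U)/Y^p\to \F_R(U)/Y$. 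With this twist, $\beta$ induces $\overline\beta\colon U/YU\to \F_{\overline R}(U/YU)$, and $\varinjlim$ commutes with $\otimes \overline R$.

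\textbf{Main obstacle.} The delicate point is that $\F_R(U)\otimes R/Y$ is naturally $\F_R(U)/Y^{p}$-related, so the $\overline R$-module $\F_R(U)/Y\F_R(U)$ is not literally $\F_{\overline R}(U/YU)$. I must verify that $\F_{\overline R}(W)\cong \F_R(W)\otimes_R \overline R$ for $\overline R$-modules $W$; this holds because $\overline R^{1}\cong R^1\otimes_R \overline R$ as bimodules when $Y$ is a regular parameter in these (power series/polynomial) rings. The first thing I would try is exactly this: prove this bimodule identification using the explicit free basis of $R$ over $R^p$, given by monomials $Y^{a}$ with $0\le a<p$ (this is where the explicit forms (i)--(iii) are used). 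Once it holds, both limits are direct limits of $\F_{\overline R}$-iterates of finitely generated $\overline R$-modules, hence $\F$-finite. The graded version follows because every map constructed is degree preserving when $Y$ has degree $0$.
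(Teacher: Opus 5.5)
The paper gives no proof of this statement; it is quoted from \cite[Theorem 1.1]{TP-koszul}. So your argument has to stand on its own, and as written it has genuine gaps.

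\textbf{1. The reduction to one element.} Lyubeznik's closure under extensions refers to extensions \emph{in the category of $\F$-modules}. To use your short exact sequence, you would already need an $\F_{\overline R}$-structure on $H_i(Y_1,\ldots,Y_r;M)$ for which both maps are $\F$-module morphisms. Producing that structure is exactly the content of the theorem. A mere module extension of $\F$-finite modules need not admit any $\F$-structure. For example, over $K[x]$, pull back $0\to K[x]\to K[x]_x\to K[x]_x/K[x]\to 0$ along multiplication by $x$ on $K[x]_x/K[x]$. This gives a module $N$ with $\Gamma_{(x)}(N)\cong K$. That is impossible for an $\F$-module, because $\Gamma_{(x)}$ of an $\F$-module is again an $\F$-module, and $\F$ multiplies the length of finite-length $(x)$-torsion modules by $p$.

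\textbf{2. The generating morphism for $(0:_NY)$ is zero.} Since $\beta$ is $R$-linear, $\beta((0:_UY))\subseteq(0:_{\F_R(U)}Y)$, and multiplication by $Y^{p-1}$ kills this submodule. For instance, take $R=K[Y]$, $U=R/Y^2R$ and $\beta(1)=Y^{2p-2}\in R/Y^{2p}R=\F_R(U)$. Then $N\cong H^1_{YR}(R)$ and $(0:_NY)\cong K\neq 0$, while your $\gamma$ vanishes. No twist is needed: $(0:_{\F_R(U)}Y)\cong\F_{\overline R}((0:_UY))$ and $\F_R(U)/Y\F_R(U)\cong\F_{\overline R}(U/YU)$ hold as they stand. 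The map $\gamma$ should simply be $\beta$ restricted to $(0:_UY)$, followed by the first isomorphism.

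\textbf{3. The ``main obstacle'' is misdiagnosed.} In fact $R^1\otimes_R\overline R\cong R/Y^pR$, not $\overline R^1$. The identity you need is $\overline R\otimes_R R^1\cong\overline R^1$ as $(\overline R,R)$-bimodules. This holds for any ring homomorphism and needs no basis; it is the proof of \cite[Proposition 2.9]{Lyu-97}. Moreover, $R$ is not free over $R^p$ on $1,Y,\ldots,Y^{p-1}$: a basis must involve all variables and a $p$-basis of $K$. If $[K:K^p]=\infty$, the rings in (ii) and (iii) are not even finite over $R^p$.

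\textbf{A correct argument} avoids the induction entirely.
\begin{enumerate}
\item Since $Y_1,\ldots,Y_r$ is $R$-regular, $H_i(Y_1,\ldots,Y_r;W)\cong\operatorname{Tor}_i^R(\overline R,W)$ naturally in $W$.
\item Take a free resolution $P_\bullet\to W$. Exactness of $\F_R$ (Kunz) makes $\F_R(P_\bullet)$ a free resolution of $\F_R(W)$.
\item Base change gives $\overline R\otimes_R\F_R(P_\bullet)\cong\F_{\overline R}(\overline R\otimes_R P_\bullet)$.
\item Since $\overline R$ is regular, $\F_{\overline R}$ is exact and commutes with homology. Hence $\operatorname{Tor}_i^R(\overline R,\F_R(W))\cong\F_{\overline R}(\operatorname{Tor}_i^R(\overline R,W))$ naturally.
\item Let $\gamma$ be $H_i(Y_1,\ldots,Y_r;\beta)$ followed by this isomorphism for $W=U$. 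By naturality, the maps $H_i(Y_1,\ldots,Y_r;\F_R^e(\beta))$ become $\F_{\overline R}^e(\gamma)$.
\item Koszul homology commutes with direct limits, so $H_i(Y_1,\ldots,Y_r;M)\cong\varinjlim_e\F^e_{\overline R}(H_i(Y_1,\ldots,Y_r;U))$, and $H_i(Y_1,\ldots,Y_r;U)$ is finitely generated over $\overline R$.
\end{enumerate}
This handles all $i$ and $r$ at once and is degree preserving when $\deg Y_j=\underline 0$. It uses only that $R$ and $\overline R$ are regular and that the $Y_j$ form an $R$-regular sequence.
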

    Even if $K$ is a finite field, $M$  a $\F$-finite, $\F$-module over $R$, we have $M/ZM$ is $\F$-finite, $\F$-module over $R/ZR$ if $Z\in R$ is a regular element. This follows from the following result.
    \begin{theorem}\cite[Proposition 2.9]{Lyu-97}\label{pi take f finite to f finite}
        Let $\pi : R\rightarrow B$ be a homomorphism of rings where $B$ is regular. If $M$ is a $\F$-finite, $\F$-module over $R$, then $B\otimes_R M$ is $\F$-finite, $\F$-module over $B$.
    \end{theorem}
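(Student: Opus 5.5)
Here $B\otimes_R M$ is a module over the regular ring $B$, and the goal is to produce a generating morphism for it. The plan is to push the generating morphism of $M$ along $\pi$ and check that everything commutes with base change.

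Let $M$ be $\F$-finite with generating morphism $\beta: U\to \F_R(U)$, where $U$ is a finitely generated $R$-module, so that $M=\varinjlim \F_R^e(U)$ with transition maps $\F_R^e(\beta)$.

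\textbf{Step 1: base change for the Frobenius functor.} I will first show that for every $R$-module $N$ there is a natural isomorphism
$$B\otimes_R \F_R(N)\cong \F_B(B\otimes_R N).$$
Using the definitions $\F_R(N)=R^1\otimes_R N$ and $\F_B(P)=B^1\otimes_B P$, both sides are identified with the same tensor product, through
$$B\otimes_R R^1\cong B^1\otimes_B (B\otimes_R\cdot\,)\circ(\text{restriction along Frobenius}).$$
Concretely, $B\otimes_R R^1 \otimes_R N$ and $B^1\otimes_B B\otimes_R N$ are both $B\otimes_{R,f} N$, where $R$ acts on $B$ through $f_B\circ\pi=\pi\circ f_R$. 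The key point is that $\pi$ commutes with Frobenius because it is a ring homomorphism in characteristic $p$. The isomorphism is $b\otimes r\otimes n\mapsto b\,\pi(r)^p\otimes 1\otimes n$; one should double-check which side carries the Frobenius twist, but this is a bookkeeping check.

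Naturality in $N$ is clear, so iterating gives $B\otimes_R\F^e_R(N)\cong \F^e_B(B\otimes_R N)$. Under these isomorphisms $B\otimes\F_R^e(\beta)$ corresponds to $\F_B^e(\beta')$, where
$$\beta'=B\otimes\beta:\ U'=B\otimes_R U\to B\otimes_R\F_R(U)\cong \F_B(U').$$

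\textbf{Step 2: tensor with $B$ and pass to the limit.} Tensor products commute with direct limits, so
$$B\otimes_R M\cong \varinjlim\bigl(U'\xrightarrow{\beta'}\F_B(U')\xrightarrow{\F_B(\beta')}\cdots\bigr).$$
Since $U'$ is finitely generated over $B$ and $B$ is regular (so that $\F_B$ is exact and the direct limit construction of the preliminaries makes $B\otimes_R M$ an $\F_B$-module), this exhibits $B\otimes_R M$ as an $\F$-finite $\F$-module with generating morphism $\beta'$. One should also note that the resulting structure isomorphism agrees with $B\otimes\theta$ composed with the base-change isomorphism of Step 1, though the statement only requires $\F$-finiteness.

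\textbf{Main obstacle.} The only real content is Step 1: verifying that the base-change isomorphism is well defined and compatible with the iterated maps $\F^e(\beta)$. In particular, one must check that the identification of $\varinjlim$ with $\F_B$ of the limit is the same one used in the definition. I expect this to be routine once the bimodule structures are tracked carefully.
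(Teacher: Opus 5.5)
Your proposal is correct and is essentially the standard argument behind the result; the paper gives no proof of its own and simply cites Lyubeznik's Proposition 2.9, which rests on the same two ingredients: iterate the natural base-change isomorphism $B\otimes_R\F_R(N)\cong\F_B(B\otimes_R N)$, then use that tensor products commute with direct limits, so that $B\otimes_R M$ is $\F$-finite with generating morphism $B\otimes_R\beta$. The one slip is your explicit formula: $r$ sits in the untwisted left slot of $R^1$, so the isomorphism is $b\otimes r\otimes n\mapsto b\,\pi(r)\otimes 1\otimes n$, whereas your version with $b\,\pi(r)^p$ is not well defined.
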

    
An analogous proof extends to the $\mathbb{Z}^n$-graded setting of Lemma 8.1 from \cite{TP-koszul} as follows:
\begin{theorem}\label{ordinal}
Let $A=K[[Z_1,\ldots,Z_d]]$ where $K$ is an infinite field of characteristic $p>0$. Let $M=\bigoplus_{\underline{u}\in \mathbb{Z}^n} M_{\underline{u}}$ be a $\mathbb{Z}^n$-graded $\F$-finite, $\F$-module over $R$  where $R=A[X_1,\ldots, X_n]$ is standard $\mathbb{N}^n$-graded polynomial ring over $A$, i.e., $\deg (A)=\underline{0}\in \mathbb{N}^n$ and $\deg(X_j)=e_j\in \mathbb{N}^n$. If $M_{\underline{u}}$ is supported only at maximal ideal of $A$, then 
\begin{enumerate}[\rm (i)]
    \item $M_{\underline{u}}\cong E^{\alpha}$ where $E$ is injective hull of $K$ as $A$-module and $\alpha$ is an ordinal which may be infinite.
    \item If $M_{\underline{u}}\cong E_A(K)^{\alpha}$, then  $H_d(Z_1,\ldots,Z_d,M)_{\underline{u}}\cong K^\alpha$.
\end{enumerate}
\end{theorem}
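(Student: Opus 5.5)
We follow the strategy of \cite[Lemma 8.1]{TP-koszul}, keeping track of the $\mathbb{Z}^n$-grading at each step. Write $\m=(Z_1,\ldots,Z_d)$ for the maximal ideal of $A$.

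\emph{Step 1: $M_{\underline{u}}$ is injective.} Since $M_{\underline{u}}$ is supported only at $\m$, every element of it is killed by a power of $\m$. Hence $M_{\underline{u}}=\Gamma_{\m}(M_{\underline{u}})$, and in fact $M_{\underline{u}}=H^0_{\m R}(M)_{\underline{u}}$. Here $\m R$ is an $\mathbb{N}^n$-graded ideal of $R$, so $H^0_{\m R}(M)$ is again a $\mathbb{Z}^n$-graded $\F$-finite, $\F$-module (by \cite[Proposition 2.10]{Lyu-97}), and it agrees with $M$ in degree $\underline{u}$. We may therefore replace $M$ by this module.

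Over the regular ring $R$, Lyubeznik's theorem gives $\injdim_R N\le \dim \operatorname{Supp} N$ for any $\F$-finite, $\F$-module $N$, and the Bass numbers $\mu_j(P,N)$ are finite for every prime $P$. Our aim is to show that $\Ext^1_A(A/\mathfrak{a},M_{\underline{u}})=0$ for every ideal $\mathfrak{a}$ of $A$; by Baer's criterion this makes $M_{\underline{u}}$ injective. Since $M_{\underline{u}}$ is $\m$-torsion, it suffices to show $\Ext^1_A(K,M_{\underline{u}})=0$, which is equivalent to injectivity for modules supported at $\m$.

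To prove this vanishing we use the Koszul complex on $Z_1,\ldots,Z_d$. By Theorem~\ref{F-finite}, the Koszul homologies $H_i(Z_1,\ldots,Z_d;M)$ are $\F$-finite, $\F$-modules over $\overline{R}=K[X_1,\ldots,X_n]$. These homologies are graded and $\m$-torsion. Because $M_{\underline{u}}$ is $\m$-torsion, the module $H^i_{\m}(M_{\underline{u}})=M_{\underline{u}}$ if $i=0$ and vanishes otherwise. The standard comparison between Koszul cohomology and local cohomology therefore reduces injectivity to the vanishing of $H_i(\underline{Z};M)_{\underline{u}}$ for $i<d$.

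The key input, and the main obstacle, is to prove this vanishing. We would argue by induction on $d$, one variable at a time. For a single variable $Z$ and an $\F$-finite, $\F$-module $N$ that is $Z$-torsion, the map $Z\colon N\to N$ is surjective: $N\cong \F(N)$, and on $\F(N)$ multiplication by $Z$ acts like $Z^{p}$-divisibility, which forces $N/ZN$ to be zero, as in the proof of \cite[Lemma 8.1]{TP-koszul}. Degree-wise, this surjectivity holds in degree $\underline{u}$. We then pass to $H_1(Z;N)=(0:_N Z)$, which is $\F$-finite over $R/ZR$ by Theorem~\ref{F-finite}, and continue. The infiniteness of $K$ enters exactly here, through Theorem~\ref{F-finite}.

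\emph{Step 2: the multiplicity.} Once $M_{\underline{u}}$ is injective and supported at $\m$, Matlis theory gives $M_{\underline{u}}\cong E^{(\alpha)}$, where $\alpha=\dim_K\operatorname{Soc}(M_{\underline{u}})$. This proves (i).

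For (ii), observe that $H_d(\underline{Z};M)_{\underline{u}}=(0:_{M_{\underline{u}}}\m)$, because the Koszul differentials preserve degree. This annihilator is the socle of $M_{\underline{u}}$, and the socle of $E^{(\alpha)}$ is $K^{(\alpha)}$, since $\operatorname{Hom}_A(K,-)$ commutes with direct sums over a Noetherian ring. This gives $H_d(Z_1,\ldots,Z_d;M)_{\underline{u}}\cong K^{\alpha}$ in the sense of direct sums.

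The main obstacle is the surjectivity step in Step 1. It requires the Frobenius structure and must be carried out degree-wise; we expect it to work because $\theta$ is degree preserving and $Z_i$ has degree $\underline{0}$.
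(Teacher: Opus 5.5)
Your reduction is correct. Replacing $M$ by $\Gamma_{\m R}(M)$ is harmless, and an $\m$-torsion $A$-module is injective once $\Ext^1_A(K,-)$ vanishes on it. Because the Koszul complex on $Z_1,\ldots,Z_d$ resolves $K$, one has $\Ext^i_A(K,M_{\underline u})\cong H_{d-i}(\underline Z;M_{\underline u})$; this, not a comparison with local cohomology, is the fact you need. If $Z_1$ acts surjectively on $N=\Gamma_{\m R}(M)$, then $K(Z_1;N)$ is quasi-isomorphic to $(0:_NZ_1)$ in degree $1$. Hence $H_j(Z_1,\ldots,Z_d;N)\cong H_{j-1}(Z_2,\ldots,Z_d;(0:_NZ_1))$ and the induction runs. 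Step 2 and part (ii) are fine. Nothing needs checking degree-wise: $N$ is $\m R$-torsion globally, so the vanishing is global and restricts to every degree since $\deg Z_i=\underline 0$.

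\textbf{The gap.} The one statement carrying all the content of (i) is not proved: a $Z$-torsion $\F$-module $N$ satisfies $N=ZN$. Your hint that $Z$ ``acts like $Z^p$-divisibility'' on $\F(N)$ does not give it. To show $1\otimes n\in Z\,\F(N)$, the only way to move $Z$ across the tensor sign is $r\otimes Zn'=rZ^p\otimes n'$. That presupposes $n\in ZN$, so the argument is circular.

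\textbf{How to fill it.} Apply exactness of Frobenius to the annihilator filtration. Let $Z$ be a nonzerodivisor with $\bar R=R/ZR$ regular (as for $Z=Z_1$), and put $N^{(k)}=(0:_NZ^k)$.

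Since $\F^e$ is exact and turns multiplication by $Z$ into multiplication by $Z^{p^e}$, the structure isomorphism $N\cong\F^e(N)$ carries $N^{(p^e)}$ onto
$P=\F^e(N^{(1)})=(R/Z^{p^e}R)\otimes_{\bar R}N^{(1)}$.
Here $R/Z^{p^e}R$ is a right $\bar R$-module via the $e$-th Frobenius.

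The sub-bimodules $G_i=Z^iR/Z^{p^e}R$ have factors $G_i/G_{i+1}$ isomorphic to the $e$-th Frobenius bimodule of $\bar R$, which is right flat by Kunz. So $F_i=G_i\otimes_{\bar R}N^{(1)}$ is a filtration of $P$ with $ZF_i=F_{i+1}$, and $Z^{p^e-1}$ induces an isomorphism $F_0/F_1\to F_{p^e-1}$.

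Hence $(0:_PZ^{p^e-1})=F_1=ZP$, which transports back to $N^{(p^e-1)}=Z\,N^{(p^e)}$. As $N$ is $Z$-torsion, each element lies in some $N^{(p^e-1)}$, whence $N=ZN$.

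The same filtration gives $(0:_PZ)=F_{p^e-1}$, so with $e=1$ one gets $N^{(1)}\cong\F_{\bar R}(N^{(1)})$. Thus $(0:_NZ)$ is an $\F$-module over $\bar R$ even without Theorem~\ref{F-finite}. With this lemma supplied, your argument is complete.
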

The following result is well-known.
\begin{proposition}\label{ass-contraction}
    If $f: A\rightarrow B$ is a homomorphism of Noetherian rings and $M$ is a $B$-module, then the associated primes of $M$ as an $A$-module are contractions of the associated primes as a $B$-module to $A$. More precisely, $\Ass_AM=A\cap \Ass_BM$.
\end{proposition}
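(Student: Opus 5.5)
The plan is the standard two-step argument: first the statement for modules over $A$, then transfer along the ring map $A\to B$.

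\emph{Step 1: the case $M$ is a $B$-module and we look at its restriction to $A$ via $f$.}
Take $\mathfrak{q}\in \Ass_A M$. By definition $\mathfrak{q}=\operatorname{ann}_A(m)$ for some $m\in M$. Since $f$ makes $M$ an $A$-module, we have $\operatorname{ann}_A(m)=f^{-1}(\operatorname{ann}_B(m))$.
\begin{itemize}
\item The ideal $\operatorname{ann}_B(m)\subseteq B$ is proper. Because $B$ is Noetherian, the primes minimal over it that are associated to the cyclic submodule $Bm\cong B/\operatorname{ann}_B(m)$ lie in $\Ass_B(Bm)\subseteq \Ass_B M$.
\item We still have to show that $\mathfrak{q}$ equals the contraction of one of these primes. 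For this, pick a finite primary decomposition $\operatorname{ann}_B(m)=Q_1\cap\dots\cap Q_s$ with $\sqrt{Q_j}=P_j\in\Ass_B(Bm)$.
\item Contracting gives $\mathfrak{q}=\bigcap_j f^{-1}(Q_j)$, and each $f^{-1}(Q_j)$ is $f^{-1}(P_j)$-primary in $A$.
\item Since $\mathfrak{q}$ is prime, it equals one of the $f^{-1}(Q_j)$ (standard prime avoidance for finite intersections). A primary ideal that is prime equals its radical, so $\mathfrak{q}=f^{-1}(P_j)$.
\end{itemize}
This proves $\Ass_A M\subseteq f^{-1}(\Ass_B M)$.

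\emph{Step 2: the reverse inclusion, and the main obstacle.}
The reverse inclusion is the subtle part, and it is false in general without hypotheses. For example, with $A=\mathbb{Z}\to B=\mathbb{Q}$ and $M=\mathbb{Q}$, we get $\Ass_B M=\{0\}$ and $\Ass_A M=\{0\}$, which is fine. But for infinitely generated $M$, contractions of associated primes need not be associated unless we use the right element.

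The plan is as follows. Let $P\in\Ass_B M$ with $P=\operatorname{ann}_B(m)$. Then $f^{-1}(P)=\operatorname{ann}_A(m)$, and this is prime because $P$ is prime. So $f^{-1}(P)\in\Ass_A M$ directly, with no extra work.

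So both inclusions hold for arbitrary $B$-modules $M$, and the only Noetherian input is the primary decomposition used in Step 1.

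\emph{Expected difficulty.}
I expect the main point needing care to be Step 1: guaranteeing that $\operatorname{ann}_A(m)$ being prime forces it to be the contraction of an \emph{associated} prime of $B$. Finiteness of the primary decomposition of $\operatorname{ann}_B(m)$ in the Noetherian ring $B$ is exactly what makes the prime-avoidance step work.

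In the paper's application, $A\to R=A[X_1,\dots,X_n]$ and $M$ is a graded component viewed over $A$. There the statement will be used to read $\Ass_A M_{\underline{u}}$ off the $R$-structure.
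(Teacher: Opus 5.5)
Your proof is correct. The paper gives no argument of its own (it simply calls the statement well known), and what you wrote is the standard proof. The inclusion $f^{-1}(\Ass_B M)\subseteq \Ass_A M$ comes from $f^{-1}(\operatorname{ann}_B(m))=\operatorname{ann}_A(m)$. The inclusion $\Ass_A M\subseteq f^{-1}(\Ass_B M)$ comes from contracting a primary decomposition of $\operatorname{ann}_B(m)$ in the Noetherian ring $B$.

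A few points in the write-up should be corrected:

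\begin{enumerate}
\item In Step 2 you claim that this inclusion ``is false in general without hypotheses.'' That is wrong, and your own one-line argument shows why: $f^{-1}(\operatorname{ann}_B(m))=\operatorname{ann}_A(m)$ holds for every ring map and every module. Drop that claim and the $\mathbb{Z}\to\mathbb{Q}$ digression.
\item In Step 1, say explicitly that the primary decomposition is irredundant. Only then does every radical $P_j$ lie in $\Ass_B(B/\operatorname{ann}_B(m))=\Ass_B(Bm)$. For example, in $k[x,y]$ the redundant decomposition $(x)=(x)\cap(x,y)$ has the non-associated radical $(x,y)$.
\item The fact you use to get $\mathfrak{q}=f^{-1}(Q_j)$ is not prime avoidance. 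It is the elementary statement that a prime ideal containing a finite intersection of ideals contains one of them, since it contains their product.
\item Only the Noetherian hypothesis on $B$ enters your argument; $A$ can be arbitrary.
\end{enumerate}
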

\begin{lemma}\cite[Lemma 4.3]{ST-25}
\label{p does not divide}
    If $p$ is a prime number and $t\in\mathbb{Z}$ with $p\nmid t$, then $p\nmid \left[\binom{P^et-1}{p^e}+1\right]$.
\end{lemma}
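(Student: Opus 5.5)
The plan is to compute $\binom{p^e t-1}{p^e}$ modulo $p$ with Lucas' theorem, reading off the base-$p$ digits of $p^e t-1$, and to show that the binomial coefficient is congruent to $t_0-1$, where $t_0\in\{1,\ldots,p-1\}$ is the residue of $t$ modulo $p$. Then $\binom{p^et-1}{p^e}+1\equiv t_0\not\equiv 0 \pmod p$, which is the claim.

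First I would treat $t\geq 1$. Write $t=t_0+t_1p+t_2p^2+\cdots$ in base $p$. Since $p\nmid t$, we have $1\le t_0\le p-1$. Then
$$p^e t-1=p^e(t-1)+(p^e-1).$$
The number $p^e-1$ has its $e$ lowest base-$p$ digits equal to $p-1$. The number $t-1$ has last digit $t_0-1$, with no borrow because $t_0\ge 1$, and its higher digits are $t_1,t_2,\ldots$. So the base-$p$ digits of $p^et-1$ are $p-1$ in positions $0,\ldots,e-1$, then $t_0-1$ in position $e$, then $t_1,t_2,\ldots$. The number $p^e$ has a single digit $1$ in position $e$. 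Lucas' theorem gives
$$\binom{p^et-1}{p^e}\equiv \prod_{j<e}\binom{p-1}{0}\cdot\binom{t_0-1}{1}\cdot\prod_{j>e}\binom{t_{j-e}}{0}=t_0-1 \pmod p,$$
so $\binom{p^et-1}{p^e}+1\equiv t_0\not\equiv 0$, as required.

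The main obstacle is $t\le 0$; $t=0$ is already excluded by $p\nmid t$. Here the upper entry is negative, and the binomial must be read as the polynomial $\binom{x}{p^e}=x(x-1)\cdots(x-p^e+1)/p^e!$. For this case I would reduce to $t\geq 1$ using two facts.

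\begin{itemize}
\item The map $x\mapsto \binom{x}{p^e}\bmod p$ on $\mathbb{Z}$ depends only on $x \bmod p^{e+1}$. One way to see this is that $\binom{x}{k}$ is $p$-adically continuous on $\mathbb{Z}_p$ and Lucas' theorem holds for $p$-adic integers. An elementary alternative is the Vandermonde identity $\binom{x+p^{e+1}}{p^e}=\sum_{i}\binom{p^{e+1}}{i}\binom{x}{p^e-i}$, together with $p\mid\binom{p^{e+1}}{i}$ for $0<i<p^{e+1}$.
\item Replacing $t$ by $t+Np$ for large $N$ changes $p^et-1$ by a multiple of $p^{e+1}$ and preserves $t_0$.
\end{itemize}

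Together these make the negative case identical to the positive one. If $\binom{x}{k}$ for negative $x$ is instead taken to be $0$ by convention, then the expression equals $1$, and the claim is trivial.
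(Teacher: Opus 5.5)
Your proof is correct. The paper gives no argument for this lemma; it is quoted from \cite[Lemma 4.3]{ST-25}, so there is no in-text proof to compare against. Your argument is a complete, self-contained proof. For $t\ge 1$ you compute $\binom{p^et-1}{p^e}\equiv t_0-1 \pmod p$ by Lucas' theorem. For $t\le -1$ you reduce to $t\ge 1$ using the $p^{e+1}$-periodicity of $x\mapsto \binom{x}{p^e} \bmod p$ on all of $\mathbb{Z}$, and your Chu--Vandermonde argument fully justifies that periodicity.

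Treating negative $t$ with the polynomial binomial coefficient is exactly what the paper needs. In Proposition~\ref{koszul concentrated in degree 0} the lemma is applied to $u_i=p^et$ for an arbitrary nonzero $u_i\in\mathbb{Z}$, and the Eulerian identity $E_i^r y=\binom{u_i}{r}y$ uses the generalized binomial. So the ``$=0$ by convention'' reading in your last sentence is not the relevant one.

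If you prefer a single computation covering every $t\in\mathbb{Z}$, reduce binomial series modulo $p$. In $\mathbb{F}_p[[X]]$ one has $(1+X)^{p^et-1}=(1+X^{p^e})^t(1+X)^{-1}$, and the coefficient of $X^{p^e}$ on the right is $t+(-1)^{p^e}\equiv t-1$. This bypasses both Lucas' theorem and the periodicity step, at the cost of using binomial series with negative exponents.
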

 Let $R=K[X_1,\ldots, X_n]$ be a standard $\mathbb{N}^n$-graded polynomial ring over $A$, i.e., $\deg (K)=\underline{0}\in \mathbb{N}^n$ and $\deg(X_j)=e_j\in \mathbb{N}^n$. Now we recall the definition of straight module over $R$ which was introduced by Yanagawa in \cite{Yanagawa-01}.
\begin{definition}
    A $\mathbb{Z}^n$-graded $R$-module $M=\bigoplus_{\underline{u}\in \mathbb{Z}^n}M_{\underline{u}}$ is called straight, if the following two conditions are satisfied.
    \begin{enumerate}[\rm (a)]
        \item$ \dim_K M_{\underline{u}}<\infty$ for all $\underline{u}\in \mathbb{Z}^n$.
        \item The multiplication map $M_{\underline{u}}\ni Y\rightarrow \underline{X}^{\underline{v}} Y\in M_{\underline{u}+\underline{v}} $ is bijective for all $\underline{u}\in \mathbb{Z}^n$ and  $\underline{v}\in \mathbb{N}^n$ with $\operatorname{supp}_{+}(\underline{u}+\underline{v})= \operatorname{supp}_{+}(\underline{u})$.
    \end{enumerate}
\end{definition}
For a $\mathbb{Z}^n$-graded module $M=\bigoplus_{\underline{u}\in \mathbb{Z}^n}M_{\underline{u}}$, let $M(\underline{-1})$ denote the module $M(-1,\ldots,-1)$. The following fact will be useful for us.
\begin{lemma}\cite[Lemma 3.2]{ST-26}\label{iso on each block}
Let    $M(\underline{-1})$ be a straight module over $K[X_1,\ldots,X_n]$. Let $U$ be a subset of $\{1,\ldots,n\}$. Then, $M_{\underline{u}}\cong M_{\underline{v}}$ for $u,v\in \mathcal{B}(U)$.
\end{lemma}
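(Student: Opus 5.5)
The plan is to translate the straightness condition for $N:=M(\underline{-1})$ into a statement about the components of $M$. We then connect any two degrees in the same block through a common "smaller" degree inside that block, using multiplication by monomials. The isomorphisms obtained are $K$-linear, indeed given by multiplication by monomials $\underline{X}^{\underline{a}}$.

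First, the shift. By definition $N_{\underline{w}}=M(\underline{-1})_{\underline{w}}=M_{\underline{w}-\underline{1}}$, where $\underline{1}=(1,\ldots,1)$. For $\underline{u}\in\mathcal{B}(U)$ put $\underline{w}=\underline{u}+\underline{1}$, so that $N_{\underline{w}}=M_{\underline{u}}$. Since the entries are integers, $u_i\ge 0$ iff $w_i\ge 1$ iff $w_i>0$, and $u_i\le -1$ iff $w_i\le 0$. Hence $\operatorname{supp}_+(\underline{w})=U$. So $\underline{u}\mapsto \underline{u}+\underline{1}$ identifies $\mathcal{B}(U)$ exactly with $\{\underline{w}\in\mathbb{Z}^n \mid \operatorname{supp}_+(\underline{w})=U\}$.

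Next, the connecting step. Let $\underline{u},\underline{v}\in\mathcal{B}(U)$ and let $\underline{m}$ be their componentwise minimum, $m_i=\min(u_i,v_i)$. The block $\mathcal{B}(U)$ is a product of the intervals $[0,\infty)$ (for $i\in U$) and $(-\infty,-1]$ (for $i\notin U$). Each interval is closed under taking minima, so $\underline{m}\in\mathcal{B}(U)$. Write $\underline{u}=\underline{m}+\underline{a}$ and $\underline{v}=\underline{m}+\underline{b}$ with $\underline{a},\underline{b}\in\mathbb{N}^n$. Then $\underline{m}+\underline{1}$ and $\underline{m}+\underline{1}+\underline{a}=\underline{u}+\underline{1}$ both have positive support $U$, by the first step. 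Condition (b) of straightness applied to $N$ then says that multiplication by $\underline{X}^{\underline{a}}$ gives a bijection
$$N_{\underline{m}+\underline{1}}\longrightarrow N_{\underline{u}+\underline{1}},$$
that is, a $K$-linear isomorphism $M_{\underline{m}}\cong M_{\underline{u}}$. In the same way, multiplication by $\underline{X}^{\underline{b}}$ gives $M_{\underline{m}}\cong M_{\underline{v}}$. Composing these yields $M_{\underline{u}}\cong M_{\underline{v}}$.

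There is no serious obstacle here. The only points needing care are the bookkeeping with the shift by $\underline{-1}$, in particular checking that $u_i\le -1$ corresponds exactly to $w_i\le 0$ and hence to $i\notin\operatorname{supp}_+(\underline{w})$, and the observation that the block is closed under componentwise minima. Together these ensure that the hypothesis of condition (b) is met for the monomial multiplications used.
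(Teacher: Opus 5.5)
Your proof is correct. The shift $\underline{u}\mapsto\underline{u}+\underline{1}$ identifies $\mathcal{B}(U)$ exactly with the degrees of $M(\underline{-1})$ whose positive support is $U$. The block is closed under componentwise minima. Condition (b) then makes multiplication by $\underline{X}^{\underline{a}}$ and by $\underline{X}^{\underline{b}}$ bijections $M_{\underline{m}}\to M_{\underline{u}}$ and $M_{\underline{m}}\to M_{\underline{v}}$.

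The paper gives no proof of its own here, since the lemma is quoted from ST-26. Your argument is the standard one: it is essentially the same monomial-multiplication bookkeeping that the paper carries out coordinate by coordinate in the proof of Theorem~\ref{rigidity theorem}.
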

\section{Multigraded Eulerian module}
The notion of  $\Z$-graded Eulerian $\D$-module was defined by Ma and Zhang in \cite{MaZh-14} for standard graded polynomial ring $R=K[X_1,\ldots,X_n]$ over a field $K$. Here $\D$ is the corresponding ring of $K$-linear differential operator on $R$. 

 Suppose that $R=K[X_1,\ldots,X_n]$ is a standard $\mathbb{N}^n$-graded polynomial ring over $K$, i.e., $\deg (K)=\underline{0}\in \mathbb{N}^n$ and $\deg(X_j)=e_j\in \mathbb{N}^n$. The corresponding ring of differential operators $\D$ of $R$ is naturally $\mathbb{Z}^n$-graded where $\deg(X_i)=e_i$ for all $i$, $\deg (\partial_i^{[j]})=-e_j$ for all $j$, and $\deg (a)=\underline{0}$ for all nonzero $a\in K$.
Under this setup, as a natural extension of $\mathbb{Z}$-graded Eulerian $\D$-module, we define $\mathbb{Z}^n$-graded Eulerian $\D$-module.
\begin{definition}
    For $i,r\geq 1$, define $E_i^r :=X_i^r\  \partial_i^{[r]}$. A $\mathbb{Z}^n$-graded $\D$-module $M$ is called a Eulerian if for every $i\geq 1$, each  element $y\in M$ with $\deg y=\underline{u}$ satisfies
    $$E^r_i\cdot y=\binom{u_i}{r}\cdot y$$
  for all $r\geq 1$.
\end{definition}
The authors in \cite{ST-26} have shown that the polynomial ring $R$ \cite[Lemma 2.4]{ST-26} as well as the local cohomology modules of the form $H^i_I(R)$ \cite[Remark 2.6]{ST-26} are $\mathbb{Z}^n$-graded Eulerian $\D$-module.
\begin{remark}\normalfont
If $(M,\theta)$ is an $\F$-module then the map $$\alpha_e:M\xrightarrow{\theta}\F(M)\xrightarrow{F(\theta)}\F^2(M)\xrightarrow{F^2(\theta)}...\rightarrow \F^e(M)$$
 induced by $\theta$ is also an isomorphism.  We show that $M$ is a $\D$-module. It is enough to specify the action of $\partial_i^{[r]}$ on $M$. Choose $e$ such that $p^e\geq r+1$. Given an element $m$, we have $\alpha_e(m)=\sum_j y_j\otimes z_j $ where $y_j\in$ $R^e$ and $z_j\in M$ and we define $$\partial_i^{[r]} m:=\alpha_e^{-1}\left(\sum_j \partial_i^{[r]} y_j\otimes z_j\right).$$
 \end{remark} 

The following result is crucial for us.
\begin{theorem}
\label{d-mod}
    If $M$ is $\mathbb{Z}^n$-graded $\F$-module, then $M$ is $\mathbb{Z}^n$-graded Eulerian $\D$-module.
\end{theorem}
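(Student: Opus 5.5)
Fix a homogeneous $m\in M$ of degree $\underline{u}$, an index $i$, and $r\geq 1$. The plan is to use the $\D$-module structure from the preceding Remark and compute $E_i^r\cdot m$ directly through Frobenius. Choose $e$ with $p^e\geq r+1$ and write $\alpha_e(m)=\sum_j y_j\otimes z_j\in \F^e(M)=R^e\otimes_R M$. The first step is to check that $\alpha_e$ is degree preserving for the grading $\deg(y\otimes z)=\deg y+p^e\deg z$; this holds because $\theta$ is degree preserving and the grading on $\F$ is compatible with iteration. Hence we may take every $y_j$ and $z_j$ homogeneous with $\deg y_j+p^e\deg z_j=\underline{u}$. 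Enlarging $e$ further if needed, each $y_j$ may be taken to be a monomial $\underline{X}^{\underline{a}_j}$ times a scalar. Indeed, $R^e$ is free as a right $R$-module on the monomials $\underline{X}^{\underline{b}}$ with $0\le b_k<p^e$, so any $y_j$ can be rewritten as $\sum_{\underline{b}} \underline{X}^{\underline{b}}\cdot c_{\underline{b}}^{p^e}$, with the $p^e$-th powers moved across the tensor. Thus each $y_j=\underline{X}^{\underline{b}_j}$ with $0\le (b_j)_k<p^e$, and $\underline{b}_j\equiv \underline{u}\pmod{p^e}$ componentwise.

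Next, the $\D$-action gives $E_i^r m=\alpha_e^{-1}\bigl(\sum_j X_i^r\partial_i^{[r]}(\underline{X}^{\underline{b}_j})\otimes z_j\bigr)=\alpha_e^{-1}\bigl(\sum_j \binom{(b_j)_i}{r}\underline{X}^{\underline{b}_j}\otimes z_j\bigr)$. Here we must confirm that $X_i^r$ acting on $M$ corresponds under $\alpha_e$ to left multiplication on $R^e$, which holds since $\alpha_e$ is $R$-linear. If every $(b_j)_i$ equals the same residue $b$ of $u_i$ mod $p^e$, this becomes $\binom{b}{r}\,m$. So the key arithmetic step is that $\binom{b}{r}\equiv\binom{u_i}{r}\pmod p$ whenever $b\equiv u_i\pmod{p^e}$ with $0\le b<p^e$ and $r<p^e$, including negative $u_i$. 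This is Lucas' theorem, applied via the polynomial identity $(1+T)^{u_i}=(1+T)^{b}(1+T^{p^e})^{(u_i-b)/p^e}$ in $\mathbb{F}_p[[T]]$: comparing coefficients of $T^r$ for $r<p^e$ gives the congruence. This handles negative exponents too, and is where something like Lemma \ref{p does not divide} would enter if one prefers the paper's tools.

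Finally, well-definedness, namely independence of $e$ and of the chosen expression, is already part of the Remark's construction, and linearity extends the identity from homogeneous elements to all of $M$. I expect the main obstacle to be the bookkeeping in the first step: showing that the homogeneous expression can be normalized so that each $y_j$ is a single monomial of exponent below $p^e$ with the right residue. This requires the graded freeness of $R^e$ over $R$ on these monomials and the degree compatibility of $\alpha_e$. For a coefficient ring $A$ beyond a field one would handle the $A$-part in the same way, since it has degree $\underline{0}$ and is killed by nothing relevant.
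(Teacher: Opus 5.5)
Your proposal is correct and is essentially the paper's argument. In both, you write $\alpha_e(m)=\sum_j y_j\otimes z_j$ with homogeneous terms, use $E_i^r y_j=\binom{b_i^j}{r}y_j$ on the Frobenius side, and conclude from the congruence $\binom{b}{r}\equiv\binom{u_i}{r}\pmod p$ for $b\equiv u_i\pmod{p^e}$ and $r<p^e$. Your normalization to monomials with exponents below $p^e$, together with the identity $(1+T)^{u_i}=(1+T)^{b}(1+T^{p^e})^{(u_i-b)/p^e}$, actually makes rigorous two steps the paper glosses over: pulling the $j$-dependent coefficient out of the sum, and the final equality $\binom{b_i^j}{r}=\binom{p^ea_i^j+b_i^j}{r}$, which holds only modulo $p$. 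The one slip is that $R^e$ is not free over $R$ on those monomials alone unless $K$ is perfect, but you never need this: leave each $y_j$'s scalar coefficient in the left factor, which is harmless since $E_i^r$ is $K$-linear.
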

\begin{proof}
    Let $m\in M$ be any homogeneous element of degree $(m_1,\ldots,m_n)$. Pick $e$ such that $p^e\geq r+1$. Since $M$ is a $\mathbb{Z}^n$-graded $\F$-module, we have a degree preserving isomorphism $\alpha_e: M\rightarrow \F^e_R(M)$. Let $\alpha_e(m)=\sum_j y_j\otimes z_j $ where $y_j\in R$ and $z_j\in M$ are homogeneous. Now $(m_1,\ldots,m_n)=\deg(m)=\deg(\alpha_e(m))=\deg(y_j\otimes z_j)=p^e\deg(z_j)+\deg(y_j)=p^e(a^j_1,\ldots,a_n^j)+(b^j_1,\ldots,b_n^j)=(p^ea^j_1+b^j_1,\ldots,p^ea^j_n+b^j_n)$, where $\deg(z_j)=(a^j_1,\ldots,a_n^j)$ and $\deg(y_j)=(b^j_1,\ldots,b_n^j)$. Therefore $m_i=p^ea^j_i+b^j_i$. Now 
   \begin{align*}
       E^r_im &=\alpha_e^{-1}\left(\sum_j(E^r_i y_j)\otimes z_j\right)\\&=\alpha_e^{-1}\left(\sum_j\binom{b^j_i}{r} y_j\otimes z_j\right)\\&=\alpha_e^{-1}\left(\binom{b^j_i}{r}\sum_j y_j\otimes z_j\right)\\&=\binom{b^j_i}{r} m\\&=\binom{p^ea^j_i+b^j_i}{r}m\\&=\binom{m_i}{r} m.
   \end{align*} 
  This proves the Theorem.
\end{proof}
\section{Vanishing, Rigidity and Straight modules}\label{vanishing , rigidity over K section}
In this section, we prove  vanishing and rigidity results for graded components of a $\mathbb{Z}^n$-graded $\F$-finite, $\F$-module over $R$ under certain assumptions.
\begin{proposition}\label{koszul concentrated in degree 0}
   Let $K$ be a  field of characteristic $p>0$. Suppose that $R=K[X_1,\ldots,X_n]$ is a standard $\mathbb{N}^n$-graded polynomial ring over $K$, i.e., $\deg (K)=\underline{0}\in \mathbb{N}^n$ and $\deg(X_j)=e_j\in \mathbb{N}^n$. Assume that $M=\bigoplus_{\underline{u}\in \mathbb{Z}^n} M_{\underline{u}}$ is a $\mathbb{Z}^n$-graded $\F$-finite, $\F$-module over $R$. Then, for all $i
   \geq 1$, $H_j(X_i,M)_{\underline{u}}=0$ for $u_i\neq 0$ and for $j=0,1$. 
\end{proposition}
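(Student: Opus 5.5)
We use Theorem \ref{d-mod}: $M$ is a $\mathbb{Z}^n$-graded Eulerian $\D$-module, so for each homogeneous $y\in M$ of degree $\underline{u}$ and each $r\geq 1$ we have $X_i^r\partial_i^{[r]}y=\binom{u_i}{r}y$. Recall that $H_1(X_i,M)=\ker(M\xrightarrow{X_i}M)(-e_i)$ and $H_0(X_i,M)=M/X_iM$, as graded modules. The plan is to show that in a degree $\underline{u}$ with $u_i\neq 0$, every element of $M_{\underline{u}}$ killed by $X_i$ is zero, and that every element of $M_{\underline{u}}$ lies in $X_iM$. The idea is to find an $r$ with $\binom{u_i}{r}$ a unit in $K$. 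Such an $r$ always exists.

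\textbf{Choosing $r$.} Write $t=u_i\neq 0$ and let $p^e$ be the exact power of $p$ dividing $t$, so $t=p^e s$ with $p\nmid s$. We take $r=p^e$. By Lucas' theorem, if $t>0$ then $\binom{t}{p^e}\equiv s\pmod p$, which is nonzero. If $t<0$ we use $\binom{t}{r}=(-1)^r\binom{r-t-1}{r}$, which gives $\binom{t}{p^e}=\pm\binom{p^e(|s|+1)-1}{p^e}$. The base-$p$ digit of $p^e(|s|+1)-1$ in position $e$ is $(|s|+1)-1=|s|\bmod p$ after the borrow, hence nonzero. Lemma \ref{p does not divide} appears to be the packaged form of exactly this computation, and we will invoke it there. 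In either case $c:=\binom{u_i}{p^e}\in K$ is nonzero.

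\textbf{$H_0$ vanishes in degree $\underline{u}$.} Let $y\in M_{\underline{u}}$. Then $y=c^{-1}X_i^{r}\partial_i^{[r]}y=X_i\cdot\bigl(c^{-1}X_i^{r-1}\partial_i^{[r]}y\bigr)$ with $r\ge1$, so $y\in X_iM$. Hence $(M/X_iM)_{\underline{u}}=0$.

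\textbf{$H_1$ vanishes in degree $\underline{u}$.} Here the degree shift must be tracked. Since $H_1(X_i,M)_{\underline{u}}\cong\{y\in M_{\underline{u}-e_i}: X_iy=0\}$, it is cleaner to prove directly that a homogeneous $y$ of degree $\underline{w}$ with $X_iy=0$ satisfies $w_i=-1$. This gives $H_1$ concentrated in $u_i=0$.

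Suppose $X_iy=0$ with $\deg y=\underline{w}$. For $r\geq1$, apply the Eulerian identity in degree $\underline{w}+e_i$ to the zero element, or use $\partial_i^{[r]}X_i=X_i\partial_i^{[r]}+\partial_i^{[r-1]}$. This gives $0=\partial_i^{[r]}X_iy=X_i\partial_i^{[r]}y+\partial_i^{[r-1]}y$. Multiplying by $X_i^{r-1}$ yields
\[
0=E_i^ry+E_i^{r-1}y=\left(\binom{w_i}{r}+\binom{w_i}{r-1}\right)y=\binom{w_i+1}{r}y .
\]
If $w_i+1\neq0$, choose $r$ as above for $t=w_i+1$ so that $\binom{w_i+1}{r}$ is a unit; this forces $y=0$. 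Thus the kernel of $X_i$ lives only in degrees with $w_i=-1$, and after the shift by $e_i$, $H_1(X_i,M)_{\underline{u}}=0$ whenever $u_i\neq0$.

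The main obstacle is the binomial-coefficient step for negative $u_i$, that is, guaranteeing a unit $\binom{t}{r}$ with $r\ge1$. We handle it with Lucas' theorem (or Lemma \ref{p does not divide}) and $r=p^{v_p(t)}$. The commutation identity $\partial_i^{[r]}X_i=X_i\partial_i^{[r]}+\partial_i^{[r-1]}$ is standard in $\D$, since $M$ is a $\D$-module.
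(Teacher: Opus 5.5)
Your proof is correct and follows the paper's approach. For $H_0$, both use the Eulerian identity from Theorem \ref{d-mod} with $r=p^e$, where $p^e$ is the exact power of $p$ dividing $u_i$; for $H_1$, both use a commutation relation in $\D$. The paper splits into the cases $p\nmid u_i$ (taking $r=1$) and $p\mid u_i$ (using $\partial_i^{[p^e]}X_i^{p^e}=X_i^{p^e}\partial_i^{[p^e]}+1$ and Lemma \ref{p does not divide}), whereas you use $\partial_i^{[r]}X_i=X_i\partial_i^{[r]}+\partial_i^{[r-1]}$ with Pascal's rule to get $\binom{u_i}{r}y=0$ uniformly, which yields the same scalar in $K$ and avoids the case split.
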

\begin{proof}
    Let $u_i\neq 0$ and  $m\in M_{\underline{u}}$. Assume $u_i=p^et$ where $p$ does not divide $t$. Since $M$ is $\mathbb{Z}^n$-graded Eulerian $$E_i^{p^e}m=X_i^{p^e}\partial_i^{[p^e]}m=\binom{p^et}{p^e}m.$$
    It can be easily verified that $\displaystyle \binom{p^et}{p^e}$ is not divisible by $p$. It follows that $m\in X_iM$. Thus, we get $H_0(X_i,M)_{\underline{u}}=0$.
    
    Now let $m\in H_1(X_i,M)_{\underline{u}}\subseteq M_{\underline{u}-e_i}$. Since $M$ is $\mathbb{Z}^n$-graded Eulerian  so
    \begin{align*}
        X_i\partial_i \ m=(u_i-1)m &\implies (\partial_i X_i-1)m=(u_i-1)m\\& \implies u_im=0\\&\implies m=0 \ \text{if}\  p\nmid u_i.
    \end{align*}
     If $p\mid u_i$, then let $u_i=p^et$ where $p\nmid t$. It is clear that $p\mid \binom{p^e}{i}$ for $i\neq 0$ and $i\neq p^e$. We have the following formula \cite[Lemma 2.3]{MaZh-14};
     \begin{align*}  
     \partial_i^{[p^e]}X_i^{p^e}&=\sum_{i=0}^{p^e}\binom{p^e}{i}X_i^{p^e-i}\partial_i^{[p^e-i]}\\&=X_i^{p^e}\partial_i^{[p^e]}+1.
     \end{align*}
Therefore,
$$0=\partial_i^{[p^e]}X_i^{p^e}m=X_i^{p^e}\partial_i^{[p^e]}m+m.$$
Since $M$ is $\mathbb{Z}^n$-graded Eulerian  and $m\in M_{\underline{u}}$ we get
$$\left[\binom{p^et-1}{p^e}+1\right]m=0\implies m=0.$$ since $p\nmid \left[\binom{p^et-1}{p^e}+1\right]$ by Lemma \ref{p does not divide}.
\end{proof}
The next result  proves that the vanishing of almost graded components of $M$ implies vanishing of $M$. More precisely we prove the following.
\begin{theorem}\label{Vanishing of M}
     Let $K$ be an infinite  field of characteristic $p>0$. Suppose that $R=K[X_1,\ldots,X_n]$ is a standard $\mathbb{N}^n$-graded polynomial ring over $K$, i.e., $\deg (K)=\underline{0}\in \mathbb{N}^n$ and $\deg(X_j)=e_j\in \mathbb{N}^n$. Assume that $M=\bigoplus_{\underline{u}\in \mathbb{Z}^n} M_{\underline{u}}$ is a $\mathbb{Z}^n$-graded $\F$-finite, $\F$-module over $R$. If $M_{\underline{u}}=0$ for all $|u_i|\gg 0 $, then $M=0$.
\end{theorem}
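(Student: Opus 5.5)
Using Proposition \ref{koszul concentrated in degree 0}, I will show that for each $i$ multiplication by $X_i$ is bijective between most adjacent graded pieces. Then the vanishing of far-out components spreads to every degree.

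The statement $H_0(X_i,M)_{\underline{u}}=0$ for $u_i\neq 0$ says that
\[
X_i\colon M_{\underline{u}-e_i}\to M_{\underline{u}}
\]
is surjective whenever $u_i\neq 0$. The statement $H_1(X_i,M)_{\underline{u}}=0$ for $u_i\neq 0$ says that the kernel of this same map, which sits inside $M_{\underline{u}-e_i}$, is zero. So for every $\underline{u}$ with $u_i\neq 0$, the map $X_i\colon M_{\underline{u}-e_i}\to M_{\underline{u}}$ is an isomorphism. Equivalently, writing $\underline{w}=\underline{u}-e_i$:
\begin{itemize}
\item $M_{\underline{w}}\cong M_{\underline{w}+e_i}$ whenever $w_i\neq -1$;
\item in particular $M_{\underline{w}}\cong M_{\underline{w}+ke_i}$ for all $k\geq 0$ when $w_i\geq 0$;
\item and $M_{\underline{w}}\cong M_{\underline{w}-ke_i}$ for all $k\geq 0$ when $w_i\leq -1$, by running the isomorphisms downward, since each step from $\underline{w}-(j+1)e_i$ to $\underline{w}-je_i$ has target $i$-th coordinate $w_i-j\leq -1\neq 0$.
\end{itemize}

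Now fix an arbitrary $\underline{w}\in\mathbb{Z}^n$. I read the hypothesis as: there is $N$ such that $M_{\underline{u}}=0$ whenever $|u_i|\geq N$ for all $i$. This is the intended reading of ``$|u_i|\gg 0$'', and it is the weakest one; if the hypothesis means ``some $|u_i|$ large'', the argument below only gets easier.

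I treat the coordinates one at a time. For coordinate $1$:
\begin{itemize}
\item if $w_1\geq 0$, I move $\underline{w}$ in the $e_1$ direction up to $w_1'\geq N$;
\item if $w_1\leq -1$, I move it down to $w_1'\leq -N$.
\end{itemize}
By the previous paragraph the component is unchanged up to isomorphism, and the other coordinates are untouched. Moving in direction $e_1$ does not change which side of $-1/0$ coordinate $1$ lies on, so I can then do the same for coordinate $2$, and so on. The admissibility condition for moving in direction $e_j$ involves only the $j$-th coordinate, so earlier moves never obstruct later ones.

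After $n$ steps I obtain $\underline{w}'$ with $|w_j'|\geq N$ for all $j$ and $M_{\underline{w}}\cong M_{\underline{w}'}=0$. Hence every graded component vanishes, and $M=\bigoplus_{\underline{u}} M_{\underline{u}}=0$.

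The main point needing care is deriving the isomorphism precisely from the Koszul homology vanishing, with the correct degree indexing. $H_1(X_i,M)$ is the kernel of $X_i$ on $M(-e_i)$, and the proposition places it in $M_{\underline{u}-e_i}$ with the condition on $u_i$, not on $u_i-1$. The boundary case to check is the single forbidden step: the map $M_{\underline{w}}\to M_{\underline{w}+e_i}$ with $w_i=-1$. This step is never used, because we always move away from the $-1/0$ boundary, so the argument is consistent. The hypothesis that $K$ is infinite is not needed beyond what Proposition \ref{koszul concentrated in degree 0} already uses.
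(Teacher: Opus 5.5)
Your proof is correct, and it takes a different and lighter route than the paper.

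The paper argues by induction on $n$:
\begin{enumerate}
\item It imports the cases $n=1,2$ from earlier work.
\item It uses Theorem~\ref{F-finite} (stated for infinite $K$) to know that $H_j(X_i,M)$ is an $\F$-finite, $\F$-module over $K[X_1,\ldots,\widehat{X_i},\ldots,X_n]$.
\item It applies the induction hypothesis to get $H_j(X_i,M)=0$ in every degree, so each $X_i$ acts bijectively on all of $M$.
\item It transports $M_{\underline{u}}$ to the vanishing component $M_{\underline{u}-t(e_1+\cdots+e_n)}$ via $(X_1\cdots X_n)^t$.
\end{enumerate}

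You instead use only the degree-wise content of Proposition~\ref{koszul concentrated in degree 0}: $X_i\colon M_{\underline{w}}\to M_{\underline{w}+e_i}$ is bijective whenever $w_i\neq -1$. This makes all components in a block $\mathcal{B}(U)$ isomorphic, which is exactly the argument the paper later gives for Theorem~\ref{rigidity theorem}. You then note that every block contains a component with all $|u_i|$ large. In your version the vanishing theorem is therefore a corollary of rigidity. It needs no induction, no external base cases, no $\F$-finiteness of Koszul homology, and no assumption that $K$ is infinite. Since the proof of Proposition~\ref{koszul concentrated in degree 0} only uses the Eulerian structure of Theorem~\ref{d-mod}, your argument even applies to any $\mathbb{Z}^n$-graded $\F$-module.

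Your route also sidesteps a point the paper passes over quickly. $H_j(X_i,M)$ is concentrated in degrees with $u_i=0$, and it involves $M_{\underline{u}-e_i}$, whose $i$-th coordinate is $-1$. Under your (weakest) reading of the hypothesis, nothing is said directly about those components. So transferring the vanishing hypothesis to $H_j(X_i,M)$ in the paper's inductive step actually requires the same block isomorphisms you use.

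The paper's route does give the formally stronger intermediate fact that every $X_i$ is bijective on $M$ in all degrees. Since the conclusion is $M=0$ anyway, nothing is gained by it here.
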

\begin{proof}
    The result when $n=1,2$ follows from  \cite[Theorem 1.5]{TP-koszul} and  \cite[Theorem 4.8]{ST-25} respectively. Let $n\geq 2$ and assume that the result is true for all values less than or equal to $n-1$. Now by Theorem \ref{F-finite}, $H_j(X_i,M)$ is $\F$-finite, $\F$-module over $K[X_1,\ldots,\widehat{X_i},\ldots,X_n]$ for $j=0,1$. Also since $M_{\underline{u}}=0$ for all $|u_i|\gg 0 $, we get that $H_j(X_i,M)_{\underline{u}}=0$ for all $|u_i|\gg 0 $. Therefore by induction hypothesis $H_j(X_i,M)=0$ for $j=0,1$.
    Therefore, the following short exact sequence $$0\rightarrow H_1(X_i,M)\rightarrow M(-e_i)\xrightarrow{X_i}M\rightarrow H_0(X_i,M)\rightarrow 0$$ implies that $M(-e_i)\xrightarrow{X_i}M$ i.e.,  $M_{\underline{u}-e_i}\xrightarrow{X_i}M_{\underline{u}}$ is an isomorphism for all $i\geq 1$. Now for a fixed $\underline{u}\in \mathbb{Z}^n$, choose a sufficiently large $t_{\underline{u}}\in \mathbb{N}$ such that $M_{\underline{u}-t_{\underline{u}}\sum_{i=1}^ne_i}=0$. Since, $M_{\underline{u}-t_{\underline{u}}\sum_{i=1}^ne_i}\xrightarrow{\left(\prod_{i=1}^{n} X_i\right)}^{t_{\underline{u}} }M_{\underline{u}}$ is an isomorphism, we get $M_{\underline{u}}=0$.
\end{proof}
 Let $\mathcal{S}$ denote the set $\{1, \ldots, n\}$ and $U$ be a subset (may be empty) of $\mathcal{S}$.  We define a block to be
$\mathcal{B}(U)=\{\underline{u} \in \Z^n \mid u_i \geq 0 \mbox{ if } i \in U \mbox{ and } u_i \leq -1 \mbox{ if } i \notin U \}.$ Note that $\bigcup_{U\subseteq \mathcal{S}}\mathcal{B}(U)=\mathbb{Z}^n$. In the next result, we study the rigidity property of the components of $\mathbb{Z}^n$-graded $\F$-finite, $\F$-modules over $K[X_1,\ldots,X_n]$. 
\begin{theorem}\label{rigidity theorem}
     Let $K$ be a  field of characteristic $p>0$. Suppose that $R=K[X_1,\ldots,X_n]$ is a standard $\mathbb{N}^n$-graded polynomial ring over $K$, i.e., $\deg (K)=\underline{0}\in \mathbb{N}^n$ and $\deg(X_j)=e_j\in \mathbb{N}^n$. Assume that $M=\bigoplus_{\underline{u}\in \mathbb{Z}^n} M_{\underline{u}}$ is a $\mathbb{Z}^n$-graded $\F$-finite, $\F$-module over $R$. Then, the following are equivalent:
     \begin{enumerate}[\rm (i)]
         \item $M_{\underline{u}}\neq 0$ for all $\underline{u}\in \mathcal{B}(U)$;
         \item $M_{\underline{w}}\neq 0$ for some $\underline{w}\in \mathcal{B}(U)$.
     \end{enumerate}
\end{theorem}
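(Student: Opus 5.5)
Only (ii)$\Rightarrow$(i) needs proof. The plan is to show that $M$ restricted to each block is rigid: multiplication by $X_i$ gives an isomorphism $M_{\underline{u}-e_i}\to M_{\underline{u}}$ whenever $\underline{u}$ and $\underline{u}-e_i$ lie in the same block. Once this is established, any two degrees $\underline{u},\underline{w}\in\mathcal{B}(U)$ can be joined by a lattice path inside $\mathcal{B}(U)$, since the block is a product of half-lines and hence convex in the lattice sense. Along such a path each step changes one coordinate $u_i$ by $\pm1$ while staying in the block, so $M_{\underline{u}}\cong M_{\underline{w}}$. In particular, $M_{\underline{w}}\neq 0$ forces $M_{\underline{u}}\neq 0$ for every $\underline{u}\in\mathcal{B}(U)$.

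The key input is Proposition~\ref{koszul concentrated in degree 0}. For each $i$ we use the graded exact sequence
\[
0\rightarrow H_1(X_i,M)_{\underline{u}}\rightarrow M_{\underline{u}-e_i}\xrightarrow{X_i}M_{\underline{u}}\rightarrow H_0(X_i,M)_{\underline{u}}\rightarrow 0 .
\]
The Proposition gives $H_0(X_i,M)_{\underline{u}}=0$ and $H_1(X_i,M)_{\underline{u}}=0$ whenever $u_i\neq 0$. One should double-check the degree convention for $H_1$: the proof there treats elements of $H_1(X_i,M)_{\underline{u}}$ as lying in $M_{\underline{u}-e_i}$ and derives a contradiction from $u_i\neq 0$, which is exactly the convention in the sequence above. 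Hence $X_i\colon M_{\underline{u}-e_i}\to M_{\underline{u}}$ is bijective for all $\underline{u}$ with $u_i\neq 0$.

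It remains to check that this covers every step of a path inside $\mathcal{B}(U)$, and this bookkeeping is the only delicate point. If $i\in U$, a step within the block goes between $u_i-1\ge 0$ and $u_i\ge 1$, so $u_i\neq0$. If $i\notin U$, a step goes between $u_i-1\le -2$ and $u_i\le -1$, so again $u_i\neq 0$. The only excluded transition is $u_i=0$, i.e.\ from $-1$ to $0$, which crosses the boundary between blocks, exactly as expected. Composing the isomorphisms coordinate by coordinate then gives the result. Unlike Theorem~\ref{Vanishing of M}, no induction on $n$ and no infiniteness of $K$ is needed, since Proposition~\ref{koszul concentrated in degree 0} holds for any field of characteristic $p$.
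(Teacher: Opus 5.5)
Your proposal is correct and follows the paper's proof essentially verbatim: the paper also uses the sequence $0\rightarrow H_1(X_i,M)\rightarrow M(-e_i)\xrightarrow{X_i}M\rightarrow H_0(X_i,M)\rightarrow 0$ together with Proposition~\ref{koszul concentrated in degree 0} to get $M_{\underline{u}-e_i}\cong M_{\underline{u}}$ whenever $u_i\neq 0$. It then changes one coordinate at a time inside $\mathcal{B}(U)$; your explicit check that every in-block step has $u_i\neq 0$ is the bookkeeping the paper leaves implicit.
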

\begin{proof}
We only need to prove (ii) implies (i). 

Consider the following short exact sequence $$0\rightarrow H_1(X_i,M)\rightarrow M(-e_i)\xrightarrow{X_i}M\rightarrow H_0(X_i,M)\rightarrow 0.$$ 
By Proposition \ref{koszul concentrated in degree 0}, we see that for all $i\geq 1$, $M(-e_i)_{\underline{u}}\xrightarrow{X_i}M_{\underline{u}}$ is an isomorphism, i.e.,  $M_{\underline{u}}\cong M_{\underline{u}-e_i}$ is an isomorphism if $u_i\neq 0$. 

 Let $U=\{s_1,s_2,\ldots,s_t\}\subseteq \mathcal{S}$. 
    Let $\underline{u}$ and $\underline{v}$ be two elements of $\mathcal{B}(U)$. Let $1\leq j\leq t$. By definition, we have $u_{s_j}\geq 0$. Therefore,
    \begin{align*}
        M_{(u_1,\ldots,0,\ldots,u_n)}\ &\cong M_{(u_1,\ldots,1,\ldots,u_n)}\\&\cong  M_{(u_1,\ldots,2,\ldots,u_n)}\\& \multicolumn{2}{c}{\vdots}  \\&\cong M_{(u_1,\ldots,u_{s_j},\ldots,u_n)} \\&\cong M_{(u_1,\ldots,u_{s_j}+1,\ldots,u_n)}\\& \multicolumn{2}{c}{\vdots}
    \end{align*}
     Hence,
$M_{(u_1,\ldots,u_{s_j},\ldots,u_n)}\cong M_{(u_1,\ldots,u_{s_j}+g,\ldots,u_n)}$ for all $g\geq - u_{s_j}$. Let $q$ be such that $q\neq s_i$ for any $1\leq i\leq t$. Then a similar argument proves that $M_{(u_1,\ldots,u_q,\ldots,u_n)}\cong M_{(u_1,\ldots,u_{q}-g,\ldots,u_n)}$ for all $g\geq u_q+1$. We note that $v_{s_j}=u_{s_j}+a$ for some $a\geq - u_{s_j}$. Also if $q$ is such that $q\neq s_i$ for any $1\leq i\leq t$ then $v_q=u_q-b$ for some $b\geq u_q+1$. Consequently, $M_{\underline{u}}\cong M_{\underline{v}}$. This proves the result.
\end{proof}
\begin{theorem}\label{each component have finite dimension}
     Let $K$ be a  field of characteristic $p>0$. Suppose that $R=K[X_1,\ldots,X_n]$ is a standard $\mathbb{N}^n$-graded polynomial ring over $K$, i.e., $\deg (K)=\underline{0}\in \mathbb{N}^n$ and $\deg(X_j)=e_j\in \mathbb{N}^n$. Assume that $M=\bigoplus_{\underline{u}\in \mathbb{Z}^n} M_{\underline{u}}$ is a $\mathbb{Z}^n$-graded $\F$-finite, $\F$-module over $R$. Then, $\dim_K(M_{\underline{u}})<\infty$ for all $\underline{u}\in\mathbb{Z}^n$.
\end{theorem}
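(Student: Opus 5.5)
Using the proof of Theorem \ref{rigidity theorem}, the dimension question collapses to finitely many degrees, and we then finish by induction on $n$ using Koszul homology. Proposition \ref{koszul concentrated in degree 0} gives, for every $i$, that multiplication by $X_i$ is an isomorphism $M_{\underline{u}-e_i}\to M_{\underline{u}}$ whenever $u_i\neq 0$. Hence every $M_{\underline{u}}$ is isomorphic to $M_{\underline{w}}$ for some $\underline{w}\in\{0,-1\}^n$, one for each block $\mathcal{B}(U)$. So it suffices to show $\dim_K M_{\underline{w}}<\infty$ for each of these $2^n$ vectors.

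We argue by induction on $n$. For $n=0$, $R=K$ and an $\F$-finite $\F$-module over $K$ is finite dimensional. Indeed, $M=\varinjlim \F^e(U)$ with $U$ finite dimensional. Since $\F$ is exact on $K$-vector spaces, the images of $\F^e(U)$ in $M$ have nonincreasing dimension $\leq\dim U$, so $\dim M\leq \dim_K U$. One could alternatively quote the $n=1$ case from \cite{TP-koszul}.

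For the inductive step, fix $\underline{w}\in\{0,-1\}^n$ and consider the exact sequence
\[
0\to H_1(X_n,M)_{\underline{u}}\to M_{\underline{u}-e_n}\xrightarrow{X_n} M_{\underline{u}}\to H_0(X_n,M)_{\underline{u}}\to 0 .
\]
By Theorem \ref{pi take f finite to f finite}, $H_0(X_n,M)=M/X_nM$ is a $\mathbb{Z}^{n-1}$-graded $\F$-finite $\F$-module over $K[X_1,\ldots,X_{n-1}]$ in degrees with $u_n=0$. This needs no assumption on $K$, since $X_n$ is a regular element and the grading is compatible. For $H_1(X_n,M)$, Theorem \ref{F-finite} requires $K$ infinite. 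To avoid this, I would pass to $\overline{K}$: the base change $\overline{K}\otimes_K M$ is again a graded $\F$-finite $\F$-module by Theorem \ref{pi take f finite to f finite}, its graded pieces are $\overline{K}\otimes_K M_{\underline{u}}$, and dimension is preserved. So we may assume $K$ infinite throughout. The induction hypothesis then shows that the graded pieces of $H_0(X_n,M)$ and $H_1(X_n,M)$ are finite dimensional.

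Take $\underline{u}$ with $u_n=0$ and remaining coordinates equal to those of $\underline{w}$. The sequence then gives
\[
\dim M_{\underline{u}-e_n}\leq \dim H_1+\dim M_{\underline{u}}, \qquad \dim M_{\underline{u}}\leq \dim H_0+\dim(\text{image of } X_n).
\]
This only bounds each of the two pieces in terms of the other, which is circular. This is the main obstacle.

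To break the circularity I would use the rigidity isomorphisms. $M_{\underline{u}}\cong M_{\underline{u}+te_n}$ for all $t\geq0$, and $M_{\underline{u}-e_n}\cong M_{\underline{u}-te_n}$ for all $t\geq1$. I would therefore show that some degree with $u_n\gg0$ and some with $u_n\ll0$ have finite dimension by working inside the limit. Write $M=\varinjlim \F^e(N)$ with $N$ a finitely generated graded $R$-module. An element of degree $\underline{u}$ in $\F^e(N)$ has the form $\sum r_j\otimes z_j$ with $\deg r_j+p^e\deg z_j=\underline{u}$.

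For the finitely many $\underline{w}\in\{0,-1\}^n$, choose $e$ large and express $M_{p^e\underline{w}+\underline{c}}$ suitably, with $\underline{c}$ in a bounded box. The required finiteness should follow from finite generation of $N$ and finite dimensionality of the graded pieces of $N$. Those pieces are finite dimensional because $N$ is finitely generated and graded over a standard $\mathbb{N}^n$-graded ring with $R_{\underline{0}}=K$. The subtle point is that the transition maps into the limit must eventually stabilize in these degrees, since the limit can add new elements. If this direct approach becomes messy, the fallback is the straight-module and Eulerian structure \cite{Yanagawa-01, MaZh-14}. Together with the injectivity of $X_n$ from $u_n=-1$ to $u_n=0$ (which forces $H_1=0$ there unless $u_n=0$ is the target), this reduces the problem to the single degree $\underline{u}$ and the bound from $H_0$.
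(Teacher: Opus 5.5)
Your reduction to the $2^n$ degrees $\underline{w}\in\{0,-1\}^n$ is correct, and so is the set-up of the Koszul induction. But, as you observe yourself, the exact sequence only says that $M_{\underline{u}-e_n}$ and $M_{\underline{u}}$ are finite dimensional simultaneously. The step meant to break this circularity is never carried out, so there is a genuine gap.

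No argument using only the Koszul sequences, the rigidity isomorphisms and the Eulerian identities can close it. Take $M=\bigoplus_{\mathbb{N}}R_{X_1\cdots X_n}$. It is a $\mathbb{Z}^n$-graded $\F$-module (since $\F$ commutes with direct sums), hence Eulerian, and $H_0(X_n,M)=H_1(X_n,M)=0$. Yet every component is infinite dimensional. So $\F$-finiteness must be used directly.

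Your sketch only asserts that finiteness ``should follow'' from finite generation of $N$ and finite dimensionality of its pieces. That is not enough: knowing $\dim_K\F^e(N)_{\underline{u}}<\infty$ for each $e$ says nothing about the limit over $e$.

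The fallback fails as well. Straightness contains $\dim_K M_{\underline{u}}<\infty$ as its condition (a), and in the paper Lemma~\ref{M(-1) is staright} is deduced from the very statement you are proving, so invoking it is circular. Moreover, the claimed injectivity of $X_n\colon M_{\underline{u}-e_n}\to M_{\underline{u}}$ for $u_n=0$ is false. For $M=H^1_{(X)}(K[X])$, the map $X\colon M_{-1}\to M_0=0$ kills the class of $X^{-1}$.

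Your ``work inside the limit'' idea does succeed once you add the missing ingredient, which is a bound uniform in $e$. With it, the induction and the reduction to $\{0,-1\}^n$ become unnecessary.

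Let $N$ be generated by homogeneous $z_1,\dots,z_m$ of degrees $\underline{d}_1,\dots,\underline{d}_m$. Note that $\F^e(R(-\underline{d}))\cong R(-p^e\underline{d})$: it is generated by $1\otimes 1$, because $r\otimes s=rs^{p^e}\otimes 1$. Since $\F^e$ is right exact, $\F^e(N)$ is a graded quotient of $\bigoplus_j R(-p^e\underline{d}_j)$. As $\dim_K R_{\underline{v}}\le 1$ for every $\underline{v}$, this gives $\dim_K\F^e(N)_{\underline{u}}\le m$ for all $e$ and $\underline{u}$.

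The maps $\F^e(\beta)$ preserve degrees, so $M_{\underline{u}}=\varinjlim_e\F^e(N)_{\underline{u}}$ is an increasing union of subspaces of dimension $\le m$. Hence $\dim_K M_{\underline{u}}\le m$. No stabilization needs to be checked and no hypothesis on $K$ is required.

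This differs from the paper's route. The paper passes to $\overline{K}$ and uses Lyubeznik's theorem that $M$ has finite length as a $\D$-module to make $M_{\underline{u}}$ Noetherian over $\D_{\underline{0}}$. It then uses the Eulerian identities to see that $\D_{\underline{0}}$ acts on $M_{\underline{u}}$ by scalars. The completed direct-limit argument is more elementary and even gives the uniform bound $\dim_K M_{\underline{u}}\le m$.
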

\begin{proof}
    We may assume that $K$ is an algebraically closed field and hence $[K:K^p]<\infty$. Let $\D$ be the corresponding ring of  $K$-linear differential operators on $R$. Note that $\D_{\underline{0}}=K\left\langle X_i^r\partial_i^{[r]}\mid 1\leq i\leq n,r\geq 1\right\rangle$.  Since $M$ is $\F$-finite, $\F$-module so by \cite[Corollary 5.8]{Lyu-97}, $M$ has finite length in the category of $\D$-module. We claim that $M_{\underline{u}}$ is Noetherian as $\D_{\underline{0}}$-module for any $\underline{u}\in\mathbb{Z}^n$. Indeed if $T$ is a $\D_{\underline{0}}$-submodule of $M_{\underline{u}}$ then $\D T\cap M_{\underline{u}}=T$. If $$T_1\subseteq T_2\subseteq\ldots\subseteq T_r\subseteq T_{(r+1)}\subseteq\ldots$$ is an ascending chain of $\D_{\underline{0}}$-submodule of $M_{\underline{u}}$, then we have ascending chain of $\D$-submodules of $M$ 
$$\D T_1\subseteq \D T_2\subseteq\ldots\subseteq \D T_r\subseteq \D T_{r+1}\subseteq\ldots.$$ Since $M$ is Noetherian there exists $t$ such that $\D T_r=\D T_t$ for all $r\geq t$. Intersecting with $M_{\underline{u}}$, we get $T_r=T_t$ for all $r\geq t$. Hence $M_{\underline{u}}$ is Noetherian as $\D_{\underline{0}}$-module and therefore a finitely generated $\D_{\underline{0}}$-module. Let $m_1,\ldots,m_s$ be the finite generators of $M_{\underline{u}}$ and let $\deg m_i=(m_1^i,m_2^i,\ldots,m_n^i)$ for all $1\leq i\leq s$. Since $M$ is $\mathbb{Z}^n$-graded Eulerian by Theorem \ref{d-mod}, we have
$$X_i^r\partial_i^{[r]} m_j=\binom{m^j_i}{r}m_j$$ for all $r\geq 1$ and for all $1\leq i\leq n$. This implies $\D_{\underline{0}}M_{\underline{u}}\subseteq K m_1+\ldots+Km_s$. Hence $\dim_K(M_{\underline{u}})<\infty$ for all $\underline{u}\in\mathbb{Z}^n$.
\end{proof}
It is proved \cite[Remark 2.13]{Yanagawa-01} that if $M$ is straight module over $R=K[X_1,\ldots,X_n]$, then $M$ is $\F$-finite, $\F$-module over $R$. In the next result, we establish the converse when $\operatorname{char}(K)=p>0$.
\begin{lemma}\label{M(-1) is staright}
     Let $K$ be a  field of characteristic $p>0$. Suppose that $R=K[X_1,\ldots,X_n]$ is a standard $\mathbb{N}^n$-graded polynomial ring over $K$, i.e., $\deg (K)=\underline{0}\in \mathbb{N}^n$ and $\deg(X_j)=e_j\in \mathbb{N}^n$. Assume that $M=\bigoplus_{\underline{u}\in \mathbb{Z}^n} M_{\underline{u}}$ is a $\mathbb{Z}^n$-graded $\F$-finite, $\F$-module over $R$. Then, $M(\underline{-1})$ is a straight module over $K[X_1,\ldots,X_n]$.
\end{lemma}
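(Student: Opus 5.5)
We check the two conditions in the definition of a straight module for $N:=M(\underline{-1})$, where $N_{\underline{u}}=M_{\underline{u}-\underline{1}}$.

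\textbf{Condition (a).} This follows at once from Theorem \ref{each component have finite dimension}: every $M_{\underline{w}}$ is a finite dimensional $K$-vector space, and hence so is every $N_{\underline{u}}$.

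\textbf{Condition (b).} Fix $\underline{u}\in\mathbb{Z}^n$ and $\underline{v}\in\mathbb{N}^n$ with $\operatorname{supp}_{+}(\underline{u}+\underline{v})=\operatorname{supp}_{+}(\underline{u})$. Multiplication by $\underline{X}^{\underline{v}}$ factors as a composite of single-variable multiplications. Each of these goes from $N_{\underline{w}}$ to $N_{\underline{w}+e_i}$, where $\underline{w}$ runs through the intermediate degrees $\underline{u}+\underline{v}'$ with $\underline{0}\le\underline{v}'\le\underline{v}$ and $\underline{v}'+e_i\le\underline{v}$. It therefore suffices to show that each such step is bijective.

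In terms of $M$, the step is the map $X_i\colon M_{\underline{w}-\underline{1}}\to M_{\underline{w}-\underline{1}+e_i}$. Writing $\underline{z}=\underline{w}-\underline{1}+e_i$, this is $X_i\colon M_{\underline{z}-e_i}\to M_{\underline{z}}$ with $z_i=w_i$. Now use the exact sequence
$$0\rightarrow H_1(X_i,M)\rightarrow M(-e_i)\xrightarrow{X_i}M\rightarrow H_0(X_i,M)\rightarrow 0.$$
By Proposition \ref{koszul concentrated in degree 0}, the kernel and cokernel in degree $\underline{z}$ vanish whenever $z_i\neq 0$. So we need $w_i\neq 0$ at every intermediate step in the $i$-th direction.

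\textbf{The support condition.} This is where the hypothesis enters. The $i$-th coordinate moves monotonically from $u_i$ to $u_i+v_i$. If $v_i>0$, the support condition says $u_i$ and $u_i+v_i$ are either both $\ge 1$ or both $\le 0$.
\begin{itemize}
\item If both are $\ge 1$, then every intermediate $w_i$ satisfies $w_i\ge u_i\ge 1$, so $w_i\neq 0$.
\item If both are $\le 0$, then the steps occur at $w_i\in\{u_i,\dots,u_i+v_i-1\}$, all of which are $\le -1$. In particular $w_i\neq 0$.
\end{itemize}
Note that the step from $w_i=0$ to $1$ is never used, since it would cross from $\le 0$ to $\ge 1$. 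In all cases each elementary multiplication is an isomorphism, so the composite $N_{\underline{u}}\xrightarrow{\underline{X}^{\underline{v}}}N_{\underline{u}+\underline{v}}$ is bijective.

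\textbf{Main obstacle.} The delicate point is this index bookkeeping: checking that the shift by $\underline{-1}$ aligns exactly with the exceptional degree $u_i=0$ of Proposition \ref{koszul concentrated in degree 0}. This depends on $\operatorname{supp}_+$ meaning the coordinates with $u_i>0$, which is Yanagawa's convention; under that convention the argument above goes through.
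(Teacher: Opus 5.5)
Your proof is correct and follows essentially the same route as the paper: condition (a) comes from Theorem \ref{each component have finite dimension}, and condition (b) from the Koszul exact sequence together with Proposition \ref{koszul concentrated in degree 0}, after checking that the shift by $\underline{-1}$ keeps every relevant target degree off $z_i=0$. The paper only verifies the single-variable steps $M(\underline{-1})_{\underline{u}}\xrightarrow{X_i}M(\underline{-1})_{\underline{u}+e_i}$ and leaves the passage to arbitrary $\underline{X}^{\underline{v}}$ implicit, so your explicit bookkeeping of the intermediate degrees adds a detail the paper omits without changing the argument.
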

\begin{proof}
Note that $M(\underline{-1})_{\underline{u}}\xrightarrow{X_i}M(\underline{-1})_{\underline{u}+e_i}$ is an isomorphism when $\operatorname{supp}_+(\underline{u})=\operatorname{supp}_+(\underline{u}+e_i)$ is equivalent to saying that $M_{\underline{u}}\xrightarrow{X_i} M_{\underline{u}+e_i}$ is an isomorphism when $\operatorname{supp}_{+}(\underline{u}+e_1+\ldots+e_n)= \operatorname{supp}_{+}(\underline{u}+e_1+\ldots+2e_i+\ldots+e_n)$. 

Let $\underline{u}\in \mathbb{Z}^n$ with $\operatorname{supp}_{+}(\underline{u}+e_1+\ldots+e_n)= \operatorname{supp}_{+}(\underline{u}+e_1+\ldots+2e_i+\ldots+e_n)$. This implies that either $u_i>-1$ or $u_i\leq -2$. Therefore, $u_i+1\neq 0$. Let $\underline{v}\in \mathbb{Z}^n$ with $v_i=u_i+1$ and $v_j=u_j$ for $j\neq i$. Consider the following short exact sequence $$0\rightarrow H_1(X_i,M)\rightarrow M(-e_i)\xrightarrow{X_i}M\rightarrow H_0(X_i,M)\rightarrow 0.$$ 
By Proposition \ref{koszul concentrated in degree 0}, we see that $M(-e_i)_{\underline{v}}\xrightarrow{X_i}M_{\underline{v}}$ i.e.,  $M_{\underline{v}-e_i}\xrightarrow{X_i}M_{\underline{v}}$ is an isomorphism. Therefore, $M_{\underline{u}}\xrightarrow{X_i} M_{\underline{u}+e_i}$ is an isomorphism. Also by Theorem \ref{each component have finite dimension}, $\dim_K(M_{\underline{u}})<\infty$ for all $\underline{u}\in\mathbb{Z}^n$. This proves that $M(\underline{-1})$ is a straight module over $K[X_1,\ldots,X_n]$.
\end{proof}
Let $\omega_R$ denote the caonical module of $R$. By \cite[Example 3.6.10]{BH-93}, we have that $\omega_R\cong R(\underline{-1})$. Lemma \ref{M(-1) is staright} gives an alternative proof a result due Musta\c t\v a \cite{Mustata-20} and Terai \cite{Terai-98} in positive characteristic.
\begin{theorem}\cite{Mustata-20, Terai-98}\label{theorem of terai and mustata}
   Let  $I_{\scriptscriptstyle \triangle}$
 be a squarefree monomial ideal of $R=K[X_1,\ldots,X_n]$ where $K$ is a field of characteristic $p>0$. For all $i\geq 0$, the local cohomology module $H^i_{I_{\scriptscriptstyle \triangle}}(\omega_R)=H^i_{I_{\scriptscriptstyle \triangle}}(R)(-1,\ldots,-1)$ is a straight module.
 \end{theorem}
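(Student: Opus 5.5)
The plan is to derive the statement from Lemma \ref{M(-1) is staright}, using the fact that local cohomology of an $\F$-finite, $\F$-module is again $\F$-finite. Note that $\omega_R \cong R(\underline{-1})$, and that a shift commutes with local cohomology as graded modules. This gives
$$H^i_{I_{\scriptscriptstyle \triangle}}(\omega_R)\cong H^i_{I_{\scriptscriptstyle \triangle}}(R(\underline{-1}))=H^i_{I_{\scriptscriptstyle \triangle}}(R)(\underline{-1}).$$
So it suffices to show that $M:=H^i_{I_{\scriptscriptstyle \triangle}}(R)$ is a $\mathbb{Z}^n$-graded $\F$-finite, $\F$-module. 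Lemma \ref{M(-1) is staright} then says directly that $M(\underline{-1})$ is straight.

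To see that $M$ is a graded $\F$-finite, $\F$-module, I start from $R$ itself. It has the degree-preserving structure isomorphism $R\to \F(R)=R^1\otimes_R R$, $r\mapsto r\otimes 1$. Since $\deg(r\otimes 1)=\deg r$, this is compatible with the grading $\deg(r'\otimes m)=\deg r'+p\deg m$, and its generating morphism is $\beta=\mathrm{id}$ on the finitely generated module $U=R$.

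Next I compute the local cohomology by the graded Čech complex on squarefree monomial generators $f_1,\ldots,f_s$ of $I_{\scriptscriptstyle \triangle}$. Each term $R_{f_{j_1}\cdots f_{j_k}}$ is a $\mathbb{Z}^n$-graded $\F$-module, since $\F(R_f)\cong R_{f^p}=R_f$ compatibly with degrees because $f$ is homogeneous. The Čech differentials are degree-preserving $\F$-module maps, and $\F$ is exact on the regular ring $R$ (Kunz). Hence the cohomology $M$ inherits a degree-preserving structure isomorphism $M\cong \F(M)$.

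For $\F$-finiteness I invoke \cite[Proposition 2.10]{Lyu-97}. The one point to check, which is the only real obstacle, is that the generating morphism can be chosen graded. I would use Lyubeznik's construction: $H^i_I(R)$ is the direct limit of $\Ext^i_R(R/I^{[p^e]},R)$ under the Frobenius maps, and the generating morphism is $\Ext^i_R(R/I,R)\to \F(\Ext^i_R(R/I,R))\cong \Ext^i_R(R/I^{[p]},R)$. For a monomial ideal $I$, all of these modules and maps are $\mathbb{Z}^n$-graded with degree-preserving maps. The identification of $\F(R/I)$ with $R/I^{[p]}$ respects the twisted grading, since $\deg(1\otimes \bar 1)=0$.

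With this, $M$ is a $\mathbb{Z}^n$-graded $\F$-finite, $\F$-module. Lemma \ref{M(-1) is staright} then gives that $M(\underline{-1})=H^i_{I_{\scriptscriptstyle \triangle}}(\omega_R)$ is straight. This covers every $i\ge 0$, including the trivial case where $M=0$.
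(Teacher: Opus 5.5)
Your proposal is correct and follows the paper's route: identify $\omega_R\cong R(\underline{-1})$, observe that $H^i_{I_{\scriptscriptstyle \triangle}}(R)$ is a $\mathbb{Z}^n$-graded $\F$-finite, $\F$-module, apply Lemma \ref{M(-1) is staright} to it, and then shift. The paper takes the graded $\F$-finiteness of $H^i_{I_{\scriptscriptstyle \triangle}}(R)$ for granted via Lyubeznik's Proposition 2.10, while you additionally check that the generating morphism $\Ext^i_R(R/I,R)\to\Ext^i_R(R/I^{[p]},R)$ is degree-preserving for the twisted grading, which is a harmless addition.
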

\section{Finiteness of some invariants related to the graded components}
In this section, we extend the results established in Section \ref{vanishing , rigidity over K section} to a more general setup. More precisely, the setup for this section is as follows.
\begin{setup}\label{main setup}
    Let $A$ be a regular ring containing a field of characteristic $p>0$. Suppose that $R=A[X_1,\ldots,X_n]$ is a standard $\mathbb{N}^n$-graded polynomial ring over $A$, i.e., $\deg (A)=\underline{0}\in \mathbb{N}^n$ and $\deg(X_j)=e_j\in \mathbb{N}^n$. Assume that $M=\bigoplus_{\underline{u}\in \mathbb{Z}^n} M_{\underline{u}}$ is a $\mathbb{Z}^n$-graded $\F$-finite, $\F$-module over $R$.
\end{setup}
\begin{remark}\normalfont
\label{stan}
  Let the hypothesis be as in Setup \ref{main setup}.  Suppose $M_{\underline{u}}\neq 0$ for some $\underline{u}\in \mathbb{Z}^n$. Let $\p$ be a minimal prime ideal of $M_{\underline{u}}$ and let $B=\widehat{A_\p}$. Set $S=B[X_1,\ldots,x_n]$. By Theorem \ref{pi take f finite to f finite}, $N=S\otimes_RM=B\otimes_A M$ is $\F$-finite, $\F$-module over $S$. By Cohen structure theorem $B=K[[t_1,\ldots,t_g]]$ where $K=\kappa(\p)$ is the residue field of $B$. We note that $N_{\underline{u}}={(M_{\underline{u}})_\p}\neq 0$. Hence $N_{\underline{u}}$ is supported only at the maximal ideal of $B$ as $\p$ is minimal prime ideal of $M_{\underline{u}}$. Therefore by Theorem \ref{ordinal}, $N_{\underline{u}}=E_B(K)^{\alpha}$ for some $\alpha$. 

    If $K$ is finite, we consider an infinite field $K'$ containing $K$ and consider the flat extension $B\rightarrow C=K'[[t_1,\ldots,t_g]]$. Set $T=C[X_1,\ldots,X_n]$, a flat extension of $S$. By Theorem \ref{pi take f finite to f finite}, $L=T\otimes_S N=C\otimes_B N$ is $\F$-finite, $\F$-module over $T$. Therefore $L_{\underline{u}}\neq 0$ and supported only at the maximal ideal of $C$. We note that $L_{\underline{u}}=E_C(K')^{\alpha}$.

    Let $V=H_g(t_1,\ldots,t_g;L)$. Then,  $V$ is  $\F$-finite, $\F$-module over $D=K'[X_1,\ldots,X_n]$ and by Theorem \ref{ordinal} $V_{\underline{u}}\cong (K')^\alpha$. Also $V\subseteq L$ and $L=M\otimes_A C$.
\end{remark}
The above remark is useful in extending the results proved in Section (\ref{vanishing , rigidity over K section}) to a more general setup, more precisely when $R$ is as in Setup \ref{main setup}.
\begin{theorem}\label{proof of vanishing theorem}
    Let $R$ be as in Setup \ref{main setup}. Assume that $M=\bigoplus_{\underline{u}\in \mathbb{Z}^n} M_{\underline{u}}$ is a $\mathbb{Z}^n$-graded $\F$-finite, $\F$-module over $R$. If $M_{\underline{u}}=0$ for all $|u_i|\gg 0 $, then $M=0$.
\end{theorem}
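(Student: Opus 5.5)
We argue by contradiction and reduce to the field case, Theorem \ref{Vanishing of M}, using the construction in Remark \ref{stan}. Suppose $M\neq 0$. Since $M=\bigoplus_{\underline{u}} M_{\underline{u}}$, some component $M_{\underline{u}}$ is nonzero. Choose a minimal prime $\p$ of $M_{\underline{u}}$ in $A$, and form $B=\widehat{A_\p}=K[[t_1,\ldots,t_g]]$ with $K=\kappa(\p)$. If $K$ is finite, pass further to $C=K'[[t_1,\ldots,t_g]]$ with $K'$ an infinite extension of $K$. Remark \ref{stan} then produces a $\mathbb{Z}^n$-graded $\F$-finite, $\F$-module $V=H_g(t_1,\ldots,t_g;L)$ over $D=K'[X_1,\ldots,X_n]$, where $L=M\otimes_A C$, and it satisfies $V_{\underline{u}}\cong (K')^{\alpha}$ with $\alpha\neq 0$ because $L_{\underline{u}}=E_C(K')^{\alpha}\neq 0$. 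In particular $V\neq 0$. The grading is preserved throughout because every base change here is along $A\to C$, which lives in degree $\underline{0}$; likewise the Koszul homology on the $t_i$ is taken componentwise.

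Next we transfer the vanishing hypothesis to $V$. Base change along $A\to C$ gives $L_{\underline{v}}=M_{\underline{v}}\otimes_A C$, where the localization $A\to A_\p$, the completion, and $B\to C$ are each flat. Hence $M_{\underline{v}}=0$ implies $L_{\underline{v}}=0$. Moreover $V_{\underline{v}}=H_g(t_1,\ldots,t_g;L)_{\underline{v}}=H_g(t_1,\ldots,t_g;L_{\underline{v}})$, since the Koszul complex on degree-zero elements splits along the grading. It follows that $V_{\underline{v}}=0$ whenever $M_{\underline{v}}=0$, and in particular $V_{\underline{v}}=0$ for all $|v_i|\gg 0$.

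Finally, $V$ is a $\mathbb{Z}^n$-graded $\F$-finite, $\F$-module over $K'[X_1,\ldots,X_n]$ with $K'$ infinite, so Theorem \ref{Vanishing of M} gives $V=0$. This contradicts $V_{\underline{u}}\neq 0$, and therefore $M=0$.

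The main obstacle is to make sure that $V$ really is a \emph{graded} $\F$-finite, $\F$-module over $D$. This means checking that Theorem \ref{F-finite}, applied to the ring $C[X_1,\ldots,X_n]$ (case (iii)), is compatible with the $\mathbb{Z}^n$-grading. I would first argue that the generating morphism of $L$ is degree-preserving, being the base change of that of $M$. Then I would use the fact that the construction in \cite{TP-koszul} is functorial in degree-zero data, as was already implicitly used in Remark \ref{stan} and Theorem \ref{ordinal}. A minor point is that $\alpha\ge 1$: this holds because $L_{\underline{u}}=(M_{\underline{u}})_\p\otimes C\neq 0$ by the choice of $\p$ and faithful flatness of the completion.
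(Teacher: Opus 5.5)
Your proposal is correct and matches the paper's proof: assume some $M_{\underline{u}}\neq 0$, use Remark \ref{stan} to produce the $\F$-finite, $\F$-module $V=H_g(t_1,\ldots,t_g;L)$ over $K'[X_1,\ldots,X_n]$ with $V_{\underline{u}}\neq 0$, note that $V$ inherits the vanishing of components, and contradict Theorem \ref{Vanishing of M}. The only differences are cosmetic. The paper transfers the vanishing through the inclusion $V\subseteq L$, where you use the componentwise formula $V_{\underline{v}}=H_g(t_1,\ldots,t_g;M_{\underline{v}}\otimes_A C)$. The paper also takes for granted the graded compatibility of Theorem \ref{F-finite} that you flag as the main obstacle.
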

\begin{proof}
    Suppose if possible $M_{\underline{v}}\neq 0$ for some $\underline{v}\in \mathbb{Z}^n$. Let $L$ be as in the Remark \ref{stan}. Now $V_{\underline{u}}=0$ for all $|u_i|\gg 0$  since $V\subseteq L$. So by Theorem \ref{Vanishing of M}, $V=0$ since $V$ is $\F$-finite, $\F$-module over $D=K'[X_1,\ldots,X_n]$. But this is a  contradiction to the fact that $V_{\underline{u}}\neq 0$.
\end{proof}
Next, we prove rigidity results concerning the graded components of $M$.
\begin{theorem}\label{proof of rigidity `t`heorem}
    Let $R$ be as in Setup \ref{main setup}. Assume that $M=\bigoplus_{\underline{u}\in \mathbb{Z}^n} M_{\underline{u}}$ is a $\mathbb{Z}^n$-graded $\F$-finite, $\F$-module over $R$. Then, the following are equivalent:
     \begin{enumerate}[\rm (i)]
         \item $M_{\underline{u}}\neq 0$ for all $\underline{u}\in \mathcal{B}(U)$;
         \item $M_{\underline{w}}\neq 0$ for some $\underline{w}\in \mathcal{B}(U)$.
     \end{enumerate}
\end{theorem}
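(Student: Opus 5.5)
Only (ii) implies (i) needs proof, and I will reduce to the field case, Theorem~\ref{rigidity theorem}, using Remark~\ref{stan}, in the same way Theorem~\ref{proof of vanishing theorem} was deduced from Theorem~\ref{Vanishing of M}. Suppose $M_{\underline{w}}\neq 0$ for some $\underline{w}\in\mathcal{B}(U)$. Apply Remark~\ref{stan} with $\underline{u}=\underline{w}$: choose a minimal prime $\p$ of $M_{\underline{w}}$ and set $B=\widehat{A_\p}=K[[t_1,\ldots,t_g]]$. If $K$ is finite, pass to $C=K'[[t_1,\ldots,t_g]]$ with $K'$ infinite; otherwise take $C=B$. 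This gives $L=M\otimes_A C$, a $\mathbb{Z}^n$-graded $\F$-finite, $\F$-module over $C[X_1,\ldots,X_n]$. It also gives $V=H_g(t_1,\ldots,t_g;L)$, a $\mathbb{Z}^n$-graded $\F$-finite, $\F$-module over $D=K'[X_1,\ldots,X_n]$ (by Theorem~\ref{F-finite}), with $V_{\underline{w}}\cong (K')^{\alpha}\neq 0$.

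Next I apply Theorem~\ref{rigidity theorem} to the $D$-module $V$. Since $V_{\underline{w}}\neq 0$ with $\underline{w}\in\mathcal{B}(U)$, it gives $V_{\underline{u}}\neq 0$ for every $\underline{u}\in\mathcal{B}(U)$.

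Finally I transfer nonvanishing back to $M$. Koszul homology is computed degreewise, because the $t_i$ have degree $\underline{0}$, so $V_{\underline{u}}=H_g(t_1,\ldots,t_g;L_{\underline{u}})\subseteq L_{\underline{u}}$. Moreover $L_{\underline{u}}=M_{\underline{u}}\otimes_A C$, since $C$ is flat over $A$ (it is the composite $A\to A_\p\to\widehat{A_\p}\to C$) and tensoring commutes with taking graded components. Hence $V_{\underline{u}}\neq 0$ forces $M_{\underline{u}}\otimes_A C\neq 0$, and so $M_{\underline{u}}\neq 0$ for all $\underline{u}\in\mathcal{B}(U)$.

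The main point to check is that the gradings are compatible throughout the base change. The tensor products $B\otimes_A M$ and $C\otimes_B N$ are graded $\F$-finite, $\F$-modules with the same grading, because a degree-preserving generating morphism stays degree-preserving after base change along degree-$\underline{0}$ rings. Likewise the Koszul homology $V$ inherits a grading in which $V_{\underline{u}}$ depends only on $L_{\underline{u}}$. With these checks, the rest is formal. Note that we never need $V_{\underline{u}}$ to be isomorphic to anything at the other degrees; nonvanishing of a submodule of $L_{\underline{u}}$ suffices.
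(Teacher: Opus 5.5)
Your proposal is correct and follows the paper's proof essentially verbatim. Both arguments use Remark~\ref{stan} at $\underline{w}$ to pass to $V=H_g(t_1,\ldots,t_g;L)$ over $K'[X_1,\ldots,X_n]$, apply the field-case Theorem~\ref{rigidity theorem} to get $V_{\underline{u}}\neq 0$ on all of $\mathcal{B}(U)$, and pull nonvanishing back through $V_{\underline{u}}\subseteq L_{\underline{u}}=M_{\underline{u}}\otimes_A C$ (flatness of $C$ is not actually needed for that identification, since tensor products commute with direct sums).
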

\begin{proof}
We only need to prove (ii) implies (i). 

     Suppose $M_{\underline{w}}\neq 0$ for some $\underline{w}\in \mathcal{B}(U)$.  Again by Remark \ref{stan}, $L_{\underline{w}}\neq 0$. Now $V$ is $\F$-finite, $\F$-module over $D=K'[X_1,\ldots,X_n]$ with $V_{\underline{w}}\neq 0$. By Theorem \ref{rigidity theorem}, $V_{\underline{u}}\neq 0$ for all $\underline{u}\in \mathcal{B}(U)$. This implies $L_{\underline{u}}\neq 0$ for all $\underline{u}\in \mathcal{B}(U)$. Therefore $M_{\underline{u}}\neq 0$ for all $\underline{u}\in \mathcal{B}(U)$ as $L_{\underline{u}}=M_{\underline{u}}\otimes_A C$.
\end{proof}
We need the following Lemma from \cite[1.4]{Lyu-93}.
\begin{lemma}\label{Lyu result on bass number}
    Let $B$ be a Noetherian ring and let $N$ be a $B$-module. Let $\p$ be a prime ideal in $B$. If $(H^j_\p(N))_\p$ is injective for all $j\geq 0$ then $\mu_j(\p,N)=\mu_0(\p,H^j_\p(N))$ for $j\geq 0$.
\end{lemma}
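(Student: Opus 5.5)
The plan is to reduce to the case where $B$ is local and $\p$ is its maximal ideal, and then compare a minimal injective resolution of $N$ with its $\p$-torsion subcomplex. The injectivity hypothesis should force that subcomplex to have zero differentials. Its terms are then exactly the local cohomology modules, and the Bass numbers can be read off degree by degree.

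\emph{Reduction to the local case.} Bass numbers localize: $\mu_j(\p,N)=\mu_j(\p B_\p,N_\p)$. Local cohomology also commutes with localization: $H^j_\p(N)_\p\cong H^j_{\p B_\p}(N_\p)$, and hence $\mu_0(\p,H^j_\p(N))=\mu_0(\p B_\p,H^j_{\p B_\p}(N_\p))$. So we may replace $B$ by $B_\p$ and $N$ by $N_\p$. From now on $(B,\m,k)$ is local, $\p=\m$, and every $H^j_\m(N)$ is injective. Note that it is $\m$-torsion.

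\emph{The torsion subcomplex.} Let $0\to N\to E^0\to E^1\to\cdots$ be a minimal injective resolution. The complex $\Gamma_\m(E^\bullet)$ consists of injective modules, each a direct sum of copies of $E(k)$, and its cohomology is $H^j_\m(N)$. Every element killed by $\m$ is $\m$-torsion, so $\Hom_B(k,E^\bullet)=\Hom_B(k,\Gamma_\m E^\bullet)$. Minimality of $E^\bullet$ gives two standard facts: this complex has zero differentials, and $\mu_j(\m,N)=\dim_k\Hom_B(k,E^j)=\dim_k\Hom_B(k,\Gamma_\m E^j)$.

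\emph{Splitting.} Set $C^j=\Gamma_\m E^j$, with cycles $Z^j$ and boundaries $B^j$. An induction on $j$ shows all $Z^j$ and $B^j$ are injective.
\begin{itemize}
\item For $j=0$ we have $B^0=0$ and $Z^0=H^0_\m(N)$, which is injective.
\item Suppose $Z^j$ is injective. Then $C^j\cong Z^j\oplus C^j/Z^j$, and $B^{j+1}\cong C^j/Z^j$ is injective.
\item The sequence $0\to B^{j+1}\to Z^{j+1}\to H^{j+1}_\m(N)\to0$ has injective outer terms. Hence it splits and $Z^{j+1}$ is injective.
\end{itemize}
Thus $C^j\cong B^j\oplus H^j_\m(N)\oplus D^j$, where the differential kills $B^j\oplus H^j_\m(N)$ and maps $D^j$ isomorphically onto $B^{j+1}$.

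\emph{Conclusion.} Apply $\Hom_B(k,-)$. The differential of $\Hom_B(k,C^\bullet)$ is zero, yet it restricts to an isomorphism $\Hom_B(k,D^j)\to\Hom_B(k,B^{j+1})$. Therefore $\Hom_B(k,B^{j+1})=0$. Since $B^{j+1}$ is injective and $\m$-torsion, any nonzero such module would have nonzero socle, so $B^{j+1}=0$. Then $D^j\cong B^{j+1}=0$ as well, and $C^j\cong H^j_\m(N)$. This gives
\[
\mu_j(\m,N)=\dim_k\Hom_B(k,C^j)=\dim_k\Hom_B\bigl(k,H^j_\m(N)\bigr)=\mu_0\bigl(\m,H^j_\m(N)\bigr).
\]

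The main point requiring care is the splitting step together with the use of minimality. Specifically, one must check that the socle of $E^j$ lies in $\Gamma_\m E^j$, and that the vanishing of the differentials on socles is exactly what kills the boundary summands.
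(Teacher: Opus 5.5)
Your proof is correct. The paper gives no proof of this lemma; it quotes it from Lyubeznik, whose argument is, as far as I recall, the Grothendieck spectral sequence $E_2^{p,q}=\Ext^p_B(B/\p,H^q_\p(N))\Rightarrow\Ext^{p+q}_B(B/\p,N)$. This spectral sequence exists because $\Hom_B(B/\p,-)=\Hom_B(B/\p,\Gamma_\p(-))$ and $\Gamma_\p$ preserves injectives. After localizing at $\p$, injectivity of $H^q_\p(N)_\p$ kills every term with $p>0$. The sequence then collapses to $\Ext^q_{B_\p}(\kappa(\p),N_\p)\cong\Hom_{B_\p}(\kappa(\p),H^q_\p(N)_\p)$, which is the claim.

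You do by hand what that collapse encodes. You first split the complex of injectives $\Gamma_\m(E^\bullet)$ into its cohomology plus a split-exact complex. You then use minimality of $E^\bullet$ (differentials vanish on socles, and nonzero $\m$-torsion modules have nonzero socle) to show the split-exact part is zero.

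Each route has its advantage:
\begin{itemize}
\item Your route avoids spectral sequences. It also gives a slightly stronger by-product: $\Gamma_\m(E^j)\cong H^j_\m(N)$, so $H^j_\p(N)_\p$ is a direct sum of $\mu_j(\p,N)$ copies of $E(B/\p)$.
\item The spectral-sequence route needs no minimal resolution and gives the isomorphism on $\Ext$ directly.
\end{itemize}

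Minimality is in fact dispensable in your argument too. $\Hom_B(k,-)$ sends the split-exact summand to an exact complex. So your splitting step alone already gives $\Ext^j_B(k,N)=H^j(\Hom_B(k,\Gamma_\m E^\bullet))\cong\Hom_B(k,H^j_\m(N))$.
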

We now show that the hypothesis of the last stated lemma is satisfied in our case.
\begin{proposition}\label{Injective satisfy} (with hypothesis as in \ref{main setup}) Let $\p$ be a prime ideal of $A$. Set $E=M_{\underline{u}}$ for some $\underline{u}\in \mathbb{Z}^n$. Then $H^j_{\p}(E)_{\p}$ is injective for all $j\geq 0$.
\end{proposition}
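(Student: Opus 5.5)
The idea is to realize $H^j_{\p}(E)_{\p}$ as a graded component of an $\F$-finite, $\F$-module that is supported only at the maximal ideal, and then invoke Theorem \ref{ordinal}(i). Since $R$ is flat over $A$ and local cohomology commutes with taking graded components, we have
\[
H^j_{\p}(E)_{\p}=\bigl(H^j_{\p R}(M)_{\underline{u}}\bigr)_{\p}=\bigl(H^j_{\p R_\p}(M_\p)\bigr)_{\underline{u}} ,
\]
where $R_\p:=A_\p[X_1,\ldots,X_n]$ and $M_\p=A_\p\otimes_A M$. This holds because the Čech complex on generators of $\p$ is degree-preserving.

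By \cite[Proposition 2.10]{Lyu-97}, $H^j_{\p R}(M)$ is again a $\mathbb{Z}^n$-graded $\F$-finite, $\F$-module over $R$; the grading is preserved because the Čech complex is made of graded $\F$-modules with degree-preserving structure maps. By Theorem \ref{pi take f finite to f finite}, applied to $R\to A_\p[X_1,\ldots,X_n]$, the localization $N:=H^j_{\p R_\p}(M_\p)$ is a graded $\F$-finite, $\F$-module over $A_\p[X_1,\ldots,X_n]$. Each component $N_{\underline{u}}$ is $\p A_\p$-torsion, so it is supported only at the maximal ideal of $A_\p$.

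Next we complete. Set $B=\widehat{A_\p}$, which is faithfully flat over $A_\p$. Then $N\otimes_{A_\p}B$ is $\F$-finite over $B[X_1,\ldots,X_n]$, again by Theorem \ref{pi take f finite to f finite}. Because $N_{\underline{u}}$ is $\p$-torsion, $N_{\underline{u}}\otimes B\cong N_{\underline{u}}$, and this identification is compatible with both the $A_\p$- and $B$-module structures. By Cohen's theorem, $B=K[[t_1,\ldots,t_g]]$ with $K=\kappa(\p)$.

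If $K$ is infinite, Theorem \ref{ordinal}(i) gives $N_{\underline{u}}\cong E_B(K)^{\alpha}$, and $E_B(K)\cong E_{A_\p}(\kappa(\p))$, which is injective over $A_\p$. A direct sum of injectives is injective over the Noetherian ring $A_\p$, so $H^j_\p(E)_\p$ is injective, as claimed.

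The main obstacle is the case of a finite residue field, since Theorem \ref{ordinal} requires $K$ infinite. Here I would proceed as in Remark \ref{stan}: pass to $C=K'[[t_1,\ldots,t_g]]$ with $K'\supseteq K$ infinite. Theorem \ref{ordinal} then gives $N_{\underline{u}}\otimes_B C\cong E_C(K')^{\alpha}$, which is injective over $C$. It remains to descend injectivity along the faithfully flat local map $B\to C$, whose closed fibre $C/\m_B C=K'[[t]]/(t)\cong K'$ is a field. For this I would use Bass numbers. By flatness, $\Ext^i_B(K,N_{\underline{u}})\otimes_B C\cong \Ext^i_C(C\otimes_B K, N_{\underline{u}}\otimes_B C)=\Ext^i_C(K',E_C(K')^{\alpha})=0$ for $i\ge 1$, and faithful flatness then forces $\Ext^i_B(K,N_{\underline{u}})=0$ for $i\ge1$. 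A $\m_B$-torsion module $T$ with $\Ext^1_B(K,T)=0$ is injective: its injective hull is a direct sum of copies of $E_B(K)$, and this vanishing says the first Bass number is zero, so the minimal injective resolution stops at degree $0$. Hence $N_{\underline{u}}$ is $B$-injective, and the previous paragraph applies.
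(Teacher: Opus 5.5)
Your proposal is correct and follows the paper's route. Like the paper, you identify $H^j_\p(E)_\p$ with the $\underline{u}$-component of the localized and completed $\F$-finite module $H^j_{\p R}(M)$, apply Theorem \ref{ordinal}(i), and use $E_B(K)\cong E_{A_\p}(\kappa(\p))$. Your faithfully flat descent of injectivity along $B\to C$ (vanishing of $\Ext^1_B(K,N_{\underline{u}})$, then Bass numbers) is a genuine improvement for a finite residue field: the paper's Remark \ref{stan} applies Theorem \ref{ordinal} over $B$ before passing to an infinite field, without checking that $\kappa(\p)$ is infinite.
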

\begin{proof}
Since  $M=\bigoplus_{\underline{u}\in \mathbb{Z}^n} M_{\underline{u}}$ is a $\mathbb{Z}^n$-graded $\F$-finite, $\F$-module over $R$, so is $H^j_{\p R}(M)$. Without loss of generality, assume that $H^j_\p(E)_\p\neq 0$. This implies that $\p$ is a minimal prime of $H^j_\p(E)$. Also note that $H^j_\p(E)=H^j_{\p R}(M)_{\underline{u}}$. Applying the technique of Remark \ref{stan} to the  $\F$-finite, $\F$-module $H^j_{\p R}(M)$, we get $N_{\underline{u}}=H^j_\p(E)_\p$ and $N_{\underline{u}}=E_B(K)^{\alpha_{\underline{u}}}$. But note that $E_B(K)=E_A(A/\p)$ as $A$-module which proves the result.
\end{proof}
\begin{theorem}\label{proof of finiteness of bass numbers}
    Let $R$ be as in Setup \ref{main setup}. Assume that $M=\bigoplus_{\underline{u}\in \mathbb{Z}^n} M_{\underline{u}}$ is a $\mathbb{Z}^n$-graded $\F$-finite, $\F$-module over $R$. Fix $\underline{u}\in\mathbb{Z}^n$. Let $\p$ be a prime ideal in $A$. Then for each $j\geq 0$, the Bass number $\mu_j(\p,M_{\underline{u}})$ is finite.
\end{theorem}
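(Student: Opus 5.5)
By Lemma \ref{Lyu result on bass number} together with Proposition \ref{Injective satisfy}, applied with $B=A$ and $N=E=M_{\underline{u}}$, we have
$$\mu_j(\p,M_{\underline{u}})=\mu_0\bigl(\p,H^j_\p(M_{\underline{u}})\bigr)\quad\text{for all } j\geq 0.$$
Moreover $H^j_\p(M_{\underline{u}})=H^j_{\p R}(M)_{\underline{u}}$, and $H^j_{\p R}(M)$ is again a $\mathbb{Z}^n$-graded $\F$-finite, $\F$-module over $R$ by \cite[Proposition 2.10]{Lyu-97}. Replacing $M$ by $H^j_{\p R}(M)$, it therefore suffices to prove the case $j=0$: for any such $M$, $\mu_0(\p,M_{\underline{u}})<\infty$.

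For $j=0$ I would first reduce to the situation where $\p$ is a minimal prime of the component. Since $\mu_0(\p,M_{\underline{u}})=\mu_0(\p,\Gamma_\p(M_{\underline{u}}))$ and $\Gamma_\p(M_{\underline{u}})=H^0_{\p R}(M)_{\underline{u}}$, I again replace $M$ by the $\F$-finite module $H^0_{\p R}(M)$. If $M_{\underline{u}}=H^0_{\p R}(M)_{\underline{u}}$ is nonzero after localizing at $\p$, then $\p$ is a minimal prime of it, since the module is $\p$-torsion. If the localization vanishes, then $\mu_0=0$ and there is nothing to prove.

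Now apply Remark \ref{stan}. Set $B=\widehat{A_\p}$; then $N_{\underline{u}}=(M_{\underline{u}})_\p\otimes B\cong E_B(K)^{\alpha}$ by Theorem \ref{ordinal}, and $E_B(K)=E_A(A/\p)$, so $\mu_0(\p,M_{\underline{u}})=\alpha$. (One must check that passing to the completion does not change this count; this holds because $(M_{\underline{u}})_\p$ is $\p$-torsion, hence already a $B$-module.) After the base change to $C$ with an infinite residue field $K'$, Theorem \ref{ordinal}(ii) gives $V_{\underline{u}}\cong (K')^{\alpha}$, where $V=H_g(t_1,\ldots,t_g;L)$ is a $\mathbb{Z}^n$-graded $\F$-finite, $\F$-module over $D=K'[X_1,\ldots,X_n]$. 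Theorem \ref{each component have finite dimension} then gives $\dim_{K'}V_{\underline{u}}<\infty$, so $\alpha$ is finite.

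The main obstacle is keeping the reductions compatible. First, Lemma \ref{Lyu result on bass number} requires injectivity of $H^j_\p(E)_\p$ for every $j$, which is exactly what Proposition \ref{Injective satisfy} supplies. Second, the identification of $\alpha$ must survive the flat extension $B\to C$, i.e.\ $E_B(K)\otimes_B C\cong E_C(K')$. The $V$-step is taken directly from Remark \ref{stan}, so it is the bookkeeping with $H^0_{\p R}$ that needs care.
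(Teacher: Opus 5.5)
Your proposal is correct and follows essentially the same route as the paper. Like the paper, you reduce via Lemma~\ref{Lyu result on bass number} and Proposition~\ref{Injective satisfy} to $\mu_0(\p,H^j_\p(M_{\underline{u}}))$, apply Remark~\ref{stan} to the $\F$-finite module $H^j_{\p R}(M)$ to obtain $E_B(K)^{\alpha}$ and $V_{\underline{u}}\cong (K')^{\alpha}$, and conclude that $\alpha$ is finite from Theorem~\ref{each component have finite dimension}. Your extra passage to $H^0_{\p R}$ is redundant in this application, since $H^j_{\p R}(M)$ is already $\p R$-torsion, but it does no harm; your remarks on completion and on $E_B(K)\otimes_B C\cong E_C(K')$ simply make explicit bookkeeping that the paper leaves implicit.
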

\begin{proof}
      By Lemma  \ref{Lyu result on bass number}, and Proposition \ref{Injective satisfy}, $\mu_j(\p,M_{\underline{u}})=\mu_0(\p,H^j_\p(M_{\underline{u}}))$ for all $\underline{u}\in \mathbb{Z}^n$. Assume that $H^j_\p(M_{\underline{u}})\neq 0$. Also either $H^j_\p(M_{\underline{u}})_\p=0$ or $\p$ is a minimal prime of $H^j_\p(M_{\underline{u}})$. Applying the technique of Remark \ref{stan} to the  $\F$-finite, $\F$-module $H^j_{\p R}(M)$, we get that $N_{\underline{u}}=H^j_\p(M_{\underline{u}})_\p$ and $N_{\underline{u}}=E_B(K)^{\alpha_{\underline{u}}}$. Note that $\alpha_{\underline{u}}=\mu_j(\p,M_{\underline{u}})$ and $V=H_g(t_1,\ldots,t_g;N)$ is $\F$-finite, $\F$-module over $D=K[X_1,\ldots,X_n]$ and we may assume that $K$ is infinite. As $V_{\underline{u}}=H_g(t_1,\ldots,t_g;N)_{\underline{u}}=K^{\alpha_{\underline{u}}}$ by Theorem \ref{ordinal}, we get that $\dim_KV_{\underline{u}}=\alpha_{\underline{u}}=\mu_j(P,M_{\underline{u}})$. The result now follows from Theorem \ref{each component have finite dimension}.
\end{proof}
\begin{theorem}\label{injective dimension theorem}
    Let $R$ be as in Setup \ref{main setup}. Assume that $M=\bigoplus_{\underline{u}\in \mathbb{Z}^n} M_{\underline{u}}$ is a $\mathbb{Z}^n$-graded $\F$-finite, $\F$-module over $R$. Fix $\underline{u}\in\mathbb{Z}^n$. Then,
    $\injdim M_{\underline{u}} \leq \dim M_{\underline{u}}$.
\end{theorem}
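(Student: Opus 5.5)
We prove $\injdim M_{\underline{u}}\leq \dim M_{\underline{u}}$ by showing that Bass numbers vanish in large degrees, following Lyubeznik's argument. Set $E=M_{\underline{u}}$. It suffices to show that $\mu_j(\p,E)=0$ whenever $j>\dim E$, for every prime $\p$ of $A$.

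By Lemma \ref{Lyu result on bass number} and Proposition \ref{Injective satisfy}, we have
\[
\mu_j(\p,E)=\mu_0\bigl(\p,H^j_\p(E)\bigr)=\mu_0\bigl(\p A_\p,H^j_{\p A_\p}(E_\p)\bigr).
\]
We may therefore assume $\p\in\operatorname{Supp}E$, since otherwise all Bass numbers at $\p$ vanish. Two cases then arise.

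If $j>\dim E_\p$, then $H^j_{\p A_\p}(E_\p)$ vanishes. The reason is Grothendieck's vanishing theorem, applied through the graded structure. Here $E_\p=(M_\p)_{\underline{u}}$, where $M_\p=A_\p\otimes_A M$ is an $\F$-finite, $\F$-module over $A_\p[X_1,\ldots,X_n]$ by Theorem \ref{pi take f finite to f finite}. The module $E_\p$ is in general not finitely generated over $A_\p$, so Grothendieck's theorem does not apply directly. Instead, $H^j_{\p A_\p}(E_\p)$ is computed by the Čech complex on generators of $\p A_\p$. We would prove the vanishing via the Remark \ref{stan} reduction, as described next.

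The main obstacle is the statement $H^j_{\p}(E)_\p=0$ for $j>\dim E_\p$ when $E$ is not finitely generated. The plan is as follows. Suppose $H^j_\p(E)_\p\neq 0$. Then $\p$ is minimal in its support, and by Remark \ref{stan} applied to $H^j_{\p R}(M)$, the module $H^j_\p(E)_\p$ is isomorphic to $E_B(K)^{\alpha}$ with $\alpha=\mu_j(\p,E)$. Now replace $E_\p$ by its $\F$-finite structure. Let $\mathfrak q\subseteq\p$ run over the minimal primes of $\operatorname{Supp}E_\p$. By Theorem \ref{ordinal} applied after localizing at $\mathfrak q$, the module $E_{\mathfrak q}$ is a direct sum of copies of $E(A_{\mathfrak q}/\mathfrak q)$. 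We then argue by induction on $\dim E_\p$, using the exact sequence $0\to\Gamma_{I}(E)\to E\to \cdots$ with $I$ an ideal defining $\operatorname{Supp}E$. Each term is a component of an $\F$-finite $\F$-module of the form $H^i_{IR}(M)$, so it again falls under the hypotheses.

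The cleanest route, which we would try first, is to note that $\operatorname{Supp}E=V(J)$ is closed for some ideal $J$ of $A$, as $M$ has finitely many associated primes (Lyubeznik). Then $E=\Gamma_J(E)$, and the Čech-type spectral sequence gives $H^j_\p(E)=H^j_{\p}(H^0_J(E))$. Next we use Lyubeznik's result that an $\F$-finite module $N$ with $\operatorname{Supp}N\subseteq V(J)$ satisfies $H^j_\p(N)_\p=0$ for $j>\operatorname{ht}(\p/J)$. This holds because $\dim\operatorname{Supp}$ bounds the cohomological dimension for modules supported on $V(J)$: one computes with a system of parameters of $(A/J)_\p$ lifted to $A_\p$, which generates an ideal with the same radical as $\p$ on $V(J)$. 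Since $\operatorname{ht}(\p/J)\leq\dim E$, the Bass numbers $\mu_j(\p,E)$ vanish for all $j>\dim E$ and all $\p$. This gives $\injdim E\leq\dim E$.
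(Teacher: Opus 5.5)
Your final argument (the ``cleanest route'') is correct, and it shares the paper's skeleton. You reduce, via Lemma \ref{Lyu result on bass number} and Proposition \ref{Injective satisfy}, to showing $H^j_\p(M_{\underline{u}})=0$ for $j>\dim M_{\underline{u}}$. The only difference is in this last step, and it comes from a false premise.

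\textbf{The false premise.} Grothendieck's vanishing theorem \emph{does} apply directly to non-finitely generated modules. In Brodmann--Sharp 6.1.2, the reference the paper uses, $\dim N$ means the dimension of $\operatorname{Supp} N$. The theorem holds for every module over a Noetherian ring: write $N$ as the direct limit of its finitely generated submodules, each of dimension at most $\dim N$, and use that local cohomology commutes with direct limits. So the paper finishes in one line, and your ``main obstacle'' is not an obstacle.

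\textbf{What your route does instead.} It re-derives the vanishing in the special case of closed support. You use finiteness of $\Ass_R M$ (Lyubeznik) and Proposition \ref{ass-contraction} to get $\operatorname{Supp}_A M_{\underline{u}}=V(J)$. Then you compute $H^j_{\p A_\p}$ with a lift $x_1,\dots,x_d$ of a system of parameters of $(A/J)_\p$. This works, but two points need fixing.
\begin{itemize}
\item The precise justification is the standard isomorphism $H^j_{I+J}(N)\cong H^j_I(N)$ for a $J$-torsion module $N$. Prove it with an injective resolution of $N$ by $J$-torsion injectives together with $\Gamma_{I+J}=\Gamma_I\circ\Gamma_J$. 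State this rather than the vague ``same radical as $\p$ on $V(J)$.''
\item This fact has nothing to do with $\F$-finiteness. It is not ``Lyubeznik's result'' about $\F$-finite modules; it holds for every $J$-torsion module.
\end{itemize}

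\textbf{Comparison.} Your version costs an unnecessary appeal to finiteness of associated primes and buys nothing extra. Applying the general theorem over $A_\p$ already gives the pointwise bound $\mu_j(\p,M_{\underline{u}})=0$ for $j>\dim (M_{\underline{u}})_\p$.

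\textbf{Cleanup.} The step ``$H^j_\p(E)=H^j_\p(H^0_J(E))$ by a spectral sequence'' is trivial, since $E=\Gamma_J(E)$. The middle paragraph is never carried out and is not needed, so drop it. That paragraph is the induction on $\dim E_\p$ using Remark \ref{stan} and Theorem \ref{ordinal} at minimal primes, with the exact sequence $0\to\Gamma_I(E)\to E\to\cdots$.
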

\begin{proof}
    Let $\p$ be a prime ideal in $A$. Then Lemma \ref{Lyu result on bass number} together with Proposition \ref{Injective satisfy} implies that
	$$
	\mu_j(\p, M_{\underline{u}}) = \mu_0(\p, H^j_\p(M_{\underline{u}})).
	$$
	By Grothendieck's vanishing theorem $H^j_\p(M_{\underline{u}}) = 0$ for all $j > \dim M_{\underline{u}}$, see \cite[6.1.2]{BS-13}.  So $\mu_j(\p, M_{\underline{u}}) = 0$ for all $j > \dim M_{\underline{u}}$. This proves the result.
\end{proof}
In Theorem \ref{rigidity theorem}, we have proved that for a given subset $U$ of $\{1,\ldots,n\}$, the components of $M$ are isomorphic in $\mathcal{B}(U)$. As a consequence, we get the following results.
\begin{theorem}\label{bass numbers,injective dimension, associted primes are constant in block}
Assume the hypothesis as in  \ref{main setup}. Let $\mathcal{B}(U)$ denote a block corresponding to a subset $U$ of $\{1,\ldots,n\}$.
\begin{enumerate}[\rm (i)]
    \item Fix a  prime ideal $\p$ of $A$ and  $i\geq 0$. Then, the set of Bass numbers $\{\mu_i(\p,M_{\underline{u}})\mid \underline{u}\in \mathbb{Z}^n\}$ is constant 
on $\mathcal{B}(U)$.
\item The sets $\{\injdim M_{\underline{u}}\mid \underline{u}\in \mathbb{Z}^n\}$, $\{\dim M_{\underline{u}}\mid \underline{u}\in \mathbb{Z}^n\}$, $\{\Ass_A M_{\underline{u}}\mid \underline{u}\in \mathbb{Z}^n\}$ are constant 
on $\mathcal{B}(U)$
\end{enumerate}
\end{theorem}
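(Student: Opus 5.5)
The plan is to prove (i) first and then deduce (ii) from it, since all the invariants in (ii) can be read off from Bass numbers. For (i), fix a prime $\p$ of $A$ and $j\geq 0$. By Lemma \ref{Lyu result on bass number} together with Proposition \ref{Injective satisfy}, $\mu_j(\p,M_{\underline{u}})=\mu_0(\p,H^j_\p(M_{\underline{u}}))$ for every $\underline{u}$. Set $M'=H^j_{\p R}(M)$. This is again a $\mathbb{Z}^n$-graded $\F$-finite, $\F$-module over $R$, and $M'_{\underline{u}}=H^j_\p(M_{\underline{u}})$.

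Next I would run the construction of Remark \ref{stan} for $M'$, but uniformly in $\underline{u}$ rather than at a single degree. Let $B=\widehat{A_\p}=K[[t_1,\ldots,t_g]]$ and $N=B\otimes_A M'$. Every component $N_{\underline{u}}=H^j_\p(M_{\underline{u}})_\p\otimes_{A_\p}B$ is $\p$-torsion, so it is supported only at the maximal ideal of $B$. No minimality hypothesis is therefore needed, and this holds for all $\underline{u}$ simultaneously. If $K$ is finite, I pass to $C=K'[[t_1,\ldots,t_g]]$ with $K'$ infinite. Theorem \ref{ordinal} then gives $N_{\underline{u}}\cong E_B(K)^{\alpha_{\underline{u}}}$, with $\alpha_{\underline{u}}=\mu_0(\p,H^j_\p(M_{\underline{u}}))=\mu_j(\p,M_{\underline{u}})$. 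It also gives $V_{\underline{u}}\cong (K')^{\alpha_{\underline{u}}}$, where $V=H_g(t_1,\ldots,t_g;L)$ is a $\mathbb{Z}^n$-graded $\F$-finite, $\F$-module over $D=K'[X_1,\ldots,X_n]$ by Theorem \ref{F-finite}.

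Now the proof of Theorem \ref{rigidity theorem} (equivalently Lemma \ref{M(-1) is staright} combined with Lemma \ref{iso on each block}) gives $V_{\underline{u}}\cong V_{\underline{v}}$ for all $\underline{u},\underline{v}\in\mathcal{B}(U)$. These spaces are finite-dimensional by Theorem \ref{each component have finite dimension}, so $\alpha_{\underline{u}}=\dim_{K'}V_{\underline{u}}=\dim_{K'}V_{\underline{v}}=\alpha_{\underline{v}}$, which proves (i). I would also remark that there is a more direct route. The argument of Theorem \ref{d-mod} and Proposition \ref{koszul concentrated in degree 0} uses only the Eulerian identities and the fact that the relevant binomial coefficients are units mod $p$, so it should go through over $A[X_1,\ldots,X_n]$. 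That would show that multiplication by $X_i\colon M_{\underline{u}-e_i}\to M_{\underline{u}}$ is an $A$-isomorphism whenever $u_i\neq 0$, hence $M_{\underline{u}}\cong M_{\underline{v}}$ as $A$-modules on each block, and everything would follow at once. The main obstacle on that route is justifying the $\D$-module structure relative to $A$, which is why I prefer the reduction above.

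For (ii), I express each invariant through Bass numbers. First, $\Ass_A M_{\underline{u}}=\{\p : \mu_0(\p,M_{\underline{u}})\neq 0\}$. Second, $\operatorname{Supp}_A M_{\underline{u}}=\{\p : \mu_j(\p,M_{\underline{u}})\neq 0 \text{ for some } j\}$, so $\dim M_{\underline{u}}=\sup\{\dim A/\p : \p\in\operatorname{Supp}M_{\underline{u}}\}$ is determined by the Bass numbers. Third, $\injdim M_{\underline{u}}=\sup\{j : \mu_j(\p,M_{\underline{u}})\neq 0 \text{ for some }\p\}$. Since every Bass number is constant on $\mathcal{B}(U)$ by (i), all three invariants are constant on $\mathcal{B}(U)$ as well.
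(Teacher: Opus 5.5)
Your argument is correct, but it is organized differently from the paper, which gives no separate proof. The paper presents the theorem as an immediate consequence of the block isomorphisms $M_{\underline{u}}\cong M_{\underline{v}}$ from Theorem \ref{rigidity theorem}, since all four invariants are isomorphism invariants of the $A$-module $M_{\underline{u}}$. Theorem \ref{rigidity theorem} is proved only over $K[X_1,\ldots,X_n]$, so that one-line justification amounts either to your ``direct route'' or to a transfer through $V$ like yours.

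Your hesitation about the direct route is unnecessary. The Frobenius-descent action defined before Theorem \ref{d-mod} only requires $\partial_i^{[r]}$, for $r<p^e$, to commute with multiplication by $p^e$-th powers of elements of $A[X_1,\ldots,X_n]$. This holds for the $A$-linear divided-power operators, because $\partial_i^{[k]}(X_i^{p^e})=\binom{p^e}{k}X_i^{p^e-k}=0$ for $0<k<p^e$. The computations in Theorem \ref{d-mod} and Proposition \ref{koszul concentrated in degree 0} then go through verbatim, since the binomial coefficients involved are units in $\mathbb{F}_p$.

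The two routes buy different things. The direct route gives the stronger conclusion $M_{\underline{u}}\cong M_{\underline{v}}$ as $A$-modules on each block, which would also make Theorem \ref{numbers are constant} immediate. Your route uses only the field-case rigidity and matches how Section 5 transfers every other result, at the price of recovering (ii) from Bass numbers. What makes your transfer legitimate is your observation that every component of $B\otimes_A H^j_{\p R}(M)$ is $\p$-torsion, so Remark \ref{stan} applies in all degrees at once.

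One claim in (ii) is false as stated. For modules that are not finitely generated, $\operatorname{Supp}_A N$ need not equal $\{\p : \mu_j(\p,N)\neq 0 \text{ for some } j\}$. For instance, $Q(A)$ over $A=K[[Y]]$ occurs as a summand of components (Theorem \ref{structure theorem}). It is injective with $\mu_j(YA,Q(A))=0$ for all $j$, yet $YA\in\operatorname{Supp}_A Q(A)$. The conclusion survives: over a Noetherian ring every prime in the support of any module contains an associated prime. Hence $\dim M_{\underline{u}}=\sup\{\dim A/\p : \p\in\Ass_A M_{\underline{u}}\}$ is determined by the $\mu_0$'s alone.
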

Also note that from \cite[Theorem 2.12]{Lyu-97}, $\Ass_R(M)$ is finite  and therefore using Proposition \ref{ass-contraction}, we see that $\bigcup_{\underline{u} \in \mathbb{Z}^n} \Ass_A M_{\underline{u}}   $ is a finite set.
\section{Structure theorem for the graded components}
Now we prove a structure theorem for the graded components $\mathbb{Z}^n$-graded $\F$-finite, $\F$-module over $R$ which is  analogue of the structure theorem for components of $H^i_I(R)$ proved in characteristic zero (see, \cite[Theorem 7.1]{TS-23}). The proof is mostly similar to the proof of \cite[Theorem 7.1]{TS-23}. However it is different in quite a few places. So we are forced to give the whole proof.
\begin{theorem}\label{structure theorem}
    Let $K$ be field of characteristic $p>0$, $A=K[[Y]]$ be a power series ring in one variable and $Q(A)$ be the field of fraction of $A$. Suppose that $R=A[X_1,\ldots,X_n]$ is a standard $\mathbb{N}^n$-graded polynomial ring over $A$, i.e., $\deg (A)=\underline{0}\in \mathbb{N}^n$ and $\deg(X_j)=e_j\in \mathbb{N}^n$. Assume that $M=\bigoplus_{\underline{u}\in \mathbb{Z}^n} M_{\underline{u}}$ is a $\mathbb{Z}^n$-graded $\F$-finite, $\F$-module over $R$. Then,
    $$M_{\underline{u}}\cong E(A/YA)^{a(\underline{u})}\oplus Q(A)^{b(\underline{u})}\oplus A^{c(\underline{u})}$$
    for finite numbers $a(\underline{u}), b(\underline{u}), c(\underline{u})\geq 0$.
\end{theorem}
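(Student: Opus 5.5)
Fix $\underline{u}$ and set $E=M_{\underline{u}}$, an $A$-module with $A=K[[Y]]$ a DVR. The plan is to split $E$ into its $Y$-torsion part and its torsion-free part, and to identify each piece using the finiteness of Bass numbers (Theorem \ref{proof of finiteness of bass numbers}) and the bound $\injdim E\le \dim E\le 1$ (Theorem \ref{injective dimension theorem}).

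\textbf{Torsion part.} Let $T=\Gamma_{YA}(E)=H^0_{YR}(M)_{\underline{u}}$. Since $H^0_{YR}(M)$ is again a graded $\F$-finite $\F$-module, Proposition \ref{Injective satisfy} with $\p=YA$ shows that $T=T_{YA}$ is injective. Indeed, $T$ is supported only at $YA$, so $H^0_{YA}(T)_{YA}=T$, and $T$ is a direct sum of copies of $E(A/YA)$. The number of copies is $\mu_0(YA,T)=\mu_0(YA,E)$, which is finite by Theorem \ref{proof of finiteness of bass numbers}. So $T\cong E(A/YA)^{a}$. Because $T$ is injective, $E\cong T\oplus E/T$ with $F:=E/T$ torsion-free.

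\textbf{Torsion-free part.} We have $\injdim F\le \injdim E\le 1$, and the Bass numbers of $F$ are finite since they are bounded by those of $E$ and $T$ through the long exact sequence. Consider the minimal injective resolution
\[0\to F\to Q(A)^{\mu_0(0,F)}\to E(A/YA)^{\mu_1(YA,F)}\to 0,\]
where $\mu_0(YA,F)=0$ because $F$ is torsion-free. Thus $F$ is a submodule of the finite-dimensional $Q(A)$-vector space $Q(A)^{r}$, with $r=\mu_0(0,F)=\mu_0(0,E)$ finite.

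Such torsion-free modules of finite rank over a DVR need not have the form $Q^b\oplus A^c$ in general; for example, $\widehat{\ }$-type modules over non-complete rings can fail this. Here, however, $A$ is complete. Write $F\supseteq D$, where $D$ is the maximal divisible submodule. $D$ is injective, hence a summand, and $D\cong Q(A)^{b}$. The complement $F'$ is torsion-free, of finite rank, and reduced (has no divisible submodule). Over a complete DVR, a reduced torsion-free module of finite rank is free. The key finiteness fact needed is $F'/YF'$ finite-dimensional over $K$, and then $F'$ is free of that rank. To obtain this, I would use that $E/YE=H_0(Y;M)_{\underline{u}}$ is the $\underline{u}$-component of $M/YM$. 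By Theorem \ref{pi take f finite to f finite}, $M/YM$ is a graded $\F$-finite $\F$-module over $K[X_1,\ldots,X_n]$, so $\dim_K E/YE<\infty$ by Theorem \ref{each component have finite dimension}. This passes to the summand $F'$.

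\textbf{Obstacle.} The main obstacle is showing that a reduced torsion-free $F'$ with $\dim_K F'/YF'=c<\infty$ is free over $A=K[[Y]]$. I would lift a basis of $F'/YF'$ to get a map $A^c\to F'$ that is surjective modulo $Y$. Completeness lets me argue that the image $G$ satisfies $F'=G+Y^kF'$ for all $k$. The cokernel $F'/G$ is then $Y$-divisible, and it is torsion since the ranks match: $G$ has full rank because $F'/YF'$ dimension bounds the rank from above. A divisible torsion module is a direct sum of copies of $E(A/YA)$. To show it is zero, I would use that $F'$ is reduced together with $\operatorname{Ext}^1_A(E(A/YA),A^c)\cong \widehat{A}^c/A^c=0$ for complete $A$. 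The possible dependence of this step on completeness is why the hypothesis $A=K[[Y]]$ matters. Injectivity of $A^c\to F'$ then follows from the rank count, giving $F'\cong A^{c}$ with $c$ finite.
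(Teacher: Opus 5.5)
\textbf{Gap: the freeness of the reduced part $F'$ is not established.} Your reduction is sound and follows the paper's architecture. $\Gamma_{YA}(E)\cong E(A/YA)^{a}$ splits off with $a=\mu_0(YA,E)<\infty$. The maximal divisible submodule of the torsion-free quotient is $Q(A)^{b}$ with $b$ finite; the paper's $L=\bigcap_n Y^n\overline{N}$ is exactly this submodule. Finally, $\dim_K F'/YF'=\dim_K (M/YM)_{\underline{u}}<\infty$. The gap is in the one step where completeness of $A$ must be used, namely that $F'$ is free. Both ingredients you give for it are defective.

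First, the claim that $\dim_K F'/YF'$ bounds $\operatorname{rank}F'$ from above is not justified. For torsion-free modules over a DVR, the inequality that always holds is $\dim_K F'/YF'\le\operatorname{rank}F'$. The reverse inequality for reduced modules is precisely what fails over non-complete DVRs, so using it to conclude that $F'/G$ is torsion is circular.

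Second, the Ext computation is wrong. Applying $\Hom_A(E(A/YA),-)$ to $0\to A\to Q(A)\to E(A/YA)\to 0$ gives $\Ext^1_A(E(A/YA),A)\cong\Hom_A(E(A/YA),E(A/YA))\cong\widehat{A}=A\neq 0$. That very sequence is a non-split extension of $E(A/YA)$ by $A$. The formula $\widehat{A}/A$ is the value of $\Ext^1_A(Q(A),A)$, not of $\Ext^1_A(E(A/YA),A)$. So, as written, nothing excludes $F'/G\neq 0$.

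\textbf{Repairs.} Either of the following closes the gap.

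(a) As in the paper: because $F'$ is torsion-free, $\bigcap_k Y^kF'$ is divisible (the argument the paper gives for $L$), so reducedness means $F'$ is $Y$-adically separated. Since $A$ is $Y$-adically complete and $F'=G+YF'$, the completeness criterion the paper invokes (Matsumura, Theorem 8.4) gives $F'=G$. This module is finitely generated and torsion-free, hence free. Note that $F'=G+Y^kF'$ for all $k$ is formal; completeness is needed exactly to pass from this to $F'=G$, which is where your sketch is missing an argument.

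(b) Keep your idea, but with the correct Ext. $F'/G$ is divisible, hence a quotient of some $Q(A)^{(I)}$. Since $G$ is free of finite rank and $\Ext^1_A(Q(A),A)\cong\widehat{A}/A=0$, the surjection $Q(A)^{(I)}\to F'/G$ lifts to a map into $F'$. Its image is a divisible submodule of the reduced module $F'$, hence zero, which forces $F'/G=0$. This version needs no rank count at all.
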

\begin{proof}
    Let $\m=YA$ and put $N=M_{\underline{u}}$. Then, $\Gamma_\m(N)=\Gamma_{\m R}(M)_{\underline{u}}$. Using Theorem \ref{injective dimension theorem}, $\injdim \Gamma_{\m R}(M)_{\underline{u}}\leq \dim \Gamma_{\m R}(M)_{\underline{u}}=0$. This implies that $\Gamma_{\m }(N)$ is injective and hence $\Gamma_{\m }(N)\cong E(A/YA)^{a(\underline{u})}$. Note that by Theorem \ref{proof of finiteness of bass numbers}, $a(\underline{u})=\mu_0(\m,N)$ is finite. Again since $\Gamma_{\m }(N)$ is injective the following short exact sequence $$0\rightarrow \Gamma_\m(N)\rightarrow N\rightarrow \overline{N}\rightarrow 0$$ splits where $\overline{N}=N/\Gamma_\m(N)$. Thus $N=\Gamma_\m(N)\oplus \overline{N}$. Now we compute $\overline{N}$.

    Let $L=\bigcap_{n=1}^{\infty}Y^n \overline{N}$. Consider the exact sequence $$0\rightarrow (0:_LY)\rightarrow L\xrightarrow{Y}L\rightarrow C\rightarrow 0.$$ We claim that $(0:_LY)=0$. To see that let $a\in (0:_LY)$. This implies $Ya=0$. Let $a=Y^j \overline{b}$ for some $b\in N$. Then $Ya=Y^{j+1}\overline{b}=0$ and hence $Y^{j+1}b\in \Gamma_\m(N)$. Therefore, $Y^{i+j+1}b=0$ for some $i\geq 1$ and this implies $\overline{b}=0$ proving that $a=0$. Now we prove that $C=0$. Let $z\in L$. Note that $L=\bigcap_{n=1}^{\infty}Y^n \overline{N}\subseteq Y \overline{N}$. Hence, $z=Y c$ for some $c\in \overline{N}$. Let $j\geq 1$ be fixed. Then $Yc=z=Y^{j+1}d$ for some $d\in \overline{N}$. Since $Y$ is $\overline{N}$-regular, $c=Y^jd\in Y^j\overline{N}$. This is true for all $j\geq 1$. Consequently, $c\in L$ implying $C=0$. 
    
    Therefore $Y$ acts as an isomorphism on $L$ and hence $L$ is an $Q(A)=A_{Y}$-module. Let $L=Q(A)^{b(\underline{u})}$ for some $b(\underline{u})$. Let $W=Q(A)[X_1,\ldots,X_n]$. Since $M$ is $\F$-finite, $\F$-module over $R$ so $M\otimes_R Q(A)[X_1,\ldots,X_n]=M\otimes_A Q(A)$ is $\F$-finite, $\F$-module over $W$ by Theorem \ref{pi take f finite to f finite}. By Theorem \ref{each component have finite dimension}, $\dim_{Q(A)}\left(M_{\underline{u}}\otimes_A Q(A)\right)<\infty$. Consider the following exact sequence
    \begin{equation*} 
    0\rightarrow \Gamma_Y(N)\rightarrow N\rightarrow \frac{N}{\Gamma_Y(N)}\rightarrow 0 .
\end{equation*}
Since $\Gamma_Y(N)\otimes_A Q(A)=0$, so $\overline{N}\otimes_A Q(A)= N\otimes_A Q(A)$.
Now $L\subseteq \overline{N}$ implies $L\otimes_A Q(A)\subseteq \overline{N}\otimes_A Q(A)= N\otimes_A Q(A)=M_{\underline{u}}\otimes_A Q(A)$. This implies $b(\underline{u})$ is finite.

   Let $\overline{M}=M/\Gamma_{\m R}(M)$. Since $\overline{M}$ is $\F$-finite, $\F$-module over $R$, we get that $H_0(Y,\overline{M})$ is $\F$-finite, $\F$-module over $K[X_1,\ldots,X_n]$ by Theorem \ref{pi take f finite to f finite}. By Theorem $\ref{each component have finite dimension}$, $\dim_K \left(\overline{M}/Y\overline{M}\right)_{\underline{u}}<\infty$. Note that,  $$\dim_K\left(\overline{N}/L\right)/Y\left(\overline{N}/L\right)=\dim_K\left(\overline{M}/Y\overline{M}\right)_{\underline{u}}<\infty.$$ Also $\bigcap_{n=1}^{\infty}Y^n \left(\overline{N}/L\right)=\left(\bigcap_{n=1}^{\infty}Y^n\overline{N}\right)/L=0$. Thus by \cite[Theorem 8.4]{Mat-89}, $\overline{N}/L$ is finitely generated. Consider the following commutative diagram
\begin{equation}\label{equation of gamma M}
\begin{tikzcd}
	0 & L & \overline{N} & \overline{N}/L & 0  \\
	0 & L &  \overline{N}  & \overline{N}/L & 0
	\arrow[from=1-1, to=1-2]
	\arrow[ from=1-2, to=1-3]
    \arrow[ from=2-4, to=2-5]
    \arrow[ "{Y}"', from=1-4, to=2-4]
	\arrow["{Y}"', from=1-2, to=2-2]
	\arrow[ from=1-3, to=1-4]
    \arrow[ from=1-4, to=1-5]
	\arrow["{Y}"', from=1-3, to=2-3]
	\arrow[ from=2-1, to=2-2]
	\arrow[ from=2-2, to=2-3]
	\arrow[ from=2-3, to=2-4]
\end{tikzcd}
\end{equation} 
Using Snake lemma, we get $\overline{N}/L\xrightarrow{Y}\overline{N}/L$ is injective and hence $\overline{N}/L$ is torsion-free. Therefore, using the structure theorem for finitely generated module over the PID $A=K[[Y]]$, we have $\overline{N}/L=A^{c(\underline{u})}$ for some finite $c(\underline{u})\geq 0$. Now the result follows as the following short exact sequence $$0\rightarrow L\rightarrow \overline{N}\rightarrow \overline{N}/L\rightarrow 0 $$ splits. 
   \end{proof}
   In the next theorem, we study the behaviour of the numbers $a(\underline{u}), b(\underline{u}), c(\underline{u})$ appearing appearing in the above structure theorem.
\begin{theorem}\label{numbers are constant}
   Let the hypothesis be as in Theorem \ref{structure theorem}. Then,the sets $\{a(\underline{u})\mid \underline{u}\in \mathbb{Z}^n\}$, $\{b(\underline{u})\mid \underline{u}\in \mathbb{Z}^n\}$ and $\{c(\underline{u})\mid \underline{u}\in \mathbb{Z}^n\}$ are constant 
on $\mathcal{B}(U)$ for each subset $U$ of $\{1, \ldots, n\}$.
\end{theorem}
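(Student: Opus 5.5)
We would prove this by reading off each of $a(\underline{u})$, $b(\underline{u})$, $c(\underline{u})$ from invariants of $M_{\underline{u}}$, or of graded components of auxiliary $\F$-finite, $\F$-modules, that are already known to be constant on $\mathcal{B}(U)$.

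For $a(\underline{u})$ the situation is direct. In the proof of Theorem \ref{structure theorem} we saw $a(\underline{u})=\mu_0(\m,M_{\underline{u}})$ with $\m=YA$. Theorem \ref{bass numbers,injective dimension, associted primes are constant in block}(i), applied with $\p=\m$ and $i=0$, then gives constancy on each block.

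For $b(\underline{u})$ and $c(\underline{u})$ we use the decomposition $M_{\underline{u}}\cong E(A/YA)^{a}\oplus Q(A)^{b}\oplus A^{c}$ to compute Bass numbers at the zero ideal $(0)$ of $A$. Localizing at $(0)$ gives $(M_{\underline{u}})_{(0)}\cong Q(A)^{b+c}$, since $E(A/YA)_{(0)}=0$. Hence $\mu_0((0),M_{\underline{u}})=b(\underline{u})+c(\underline{u})$, and by Theorem \ref{bass numbers,injective dimension, associted primes are constant in block}(i) this sum is constant on $\mathcal{B}(U)$.

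To separate $c$ from $b$ we compute $\mu_1(\m,M_{\underline{u}})$. A minimal injective resolution of $A$ is $0\to A\to Q(A)\to Q(A)/A\cong E(A/YA)\to 0$, so $\mu_1(\m,A)=1$. The modules $Q(A)$ and $E(A/YA)$ are injective, so they contribute nothing in degree $1$. Since Bass numbers are additive over finite direct sums, $\mu_1(\m,M_{\underline{u}})=c(\underline{u})$. Applying Theorem \ref{bass numbers,injective dimension, associted primes are constant in block}(i) again, with $\p=\m$ and $i=1$, shows $c(\underline{u})$ is constant on $\mathcal{B}(U)$. Therefore $b(\underline{u})=\mu_0((0),M_{\underline{u}})-c(\underline{u})$ is constant as well.

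There is an alternative route for $c(\underline{u})$: the quantity $\dim_K(\overline{M}/Y\overline{M})_{\underline{u}}$ from the proof of Theorem \ref{structure theorem} equals $c(\underline{u})$. One could then invoke the rigidity isomorphisms of Theorem \ref{rigidity theorem} for the $\F$-finite module $H_0(Y,\overline{M})$ over $K[X_1,\ldots,X_n]$. The Bass-number argument is cleaner, however.

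The main point needing care is Theorem \ref{bass numbers,injective dimension, associted primes are constant in block}(i) itself, which is only stated. We would justify it as follows. By Lemma \ref{Lyu result on bass number} and Proposition \ref{Injective satisfy}, $\mu_i(\p,M_{\underline{u}})=\dim_K V_{\underline{u}}$, where $V=H_g(t_1,\ldots,t_g;N)$ is built from $H^i_{\p R}(M)$ as in Remark \ref{stan} and the proof of Theorem \ref{proof of finiteness of bass numbers}. Since $V$ is an $\F$-finite, $\F$-module over $K'[X_1,\ldots,X_n]$, the isomorphisms $V_{\underline{u}}\cong V_{\underline{v}}$ for $\underline{u},\underline{v}\in\mathcal{B}(U)$ obtained in the proof of Theorem \ref{rigidity theorem} give the constancy. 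The one subtlety is that Remark \ref{stan} starts from a minimal prime of a particular component. We would handle this by localizing the whole module $H^i_{\p R}(M)$ at $\p$ and completing, rather than working component by component. After that step every component is supported only at the maximal ideal, so Theorem \ref{ordinal} applies uniformly in $\underline{u}$.
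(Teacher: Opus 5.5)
Your proof is correct, but for $c(\underline{u})$ it follows a different route from the paper.

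\textbf{What coincides with the paper.} For $a(\underline{u})=\mu_0(\m,M_{\underline{u}})$ you argue exactly as the paper does. Your treatment of $b(\underline{u})+c(\underline{u})=\mu_0((0),M_{\underline{u}})$ is also essentially the paper's argument once Theorem C(i) is unpacked at $\p=(0)$. There $B=Q(A)$, $g=0$ and $V=M\otimes_A Q(A)$, and the paper argues directly that $M\otimes_A Q(A)$ is $\F$-finite over $Q(A)[X_1,\ldots,X_n]$ and so has block-wise isomorphic components.

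\textbf{Where you differ.} The paper gets $c(\underline{u})$ as $\dim_K(\overline{M}/Y\overline{M})_{\underline{u}}$, with $\overline{M}=M/\Gamma_{\m R}(M)$. Here $\overline{M}/Y\overline{M}=\overline{M}\otimes_R R/YR$ is $\F$-finite over $K[X_1,\ldots,X_n]$ by plain base change, and straightness of its shift by $\underline{-1}$ gives constancy. You instead use $c(\underline{u})=\mu_1(\m,M_{\underline{u}})$. The paper records this identity only in its closing remark, and uses it there in the opposite direction. There is no circularity, since Theorem C(i) does not depend on the structure theorem.

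\textbf{What each route buys.} Your version is uniform: all three numbers become Bass numbers, $a=\mu_0(\m,-)$, $c=\mu_1(\m,-)$, $b+c=\mu_0((0),-)$, so everything reduces to one block-constancy statement. The cost is that this statement is only asserted in the paper. For $(\p,i)=(\m,1)$ your justification must pass through $H^1_{\m R}(M)$, Lyubeznik's Bass-number lemma, and Theorem~\ref{ordinal}. It also needs $\F$-finiteness of the Koszul homology $H_1(Y;H^1_{\m R}(M))$, which requires an infinite residue field and hence the extension to $K'$. The paper's $H_0$ argument for $c$ needs none of this and works over any $K$.

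\textbf{On your justification of Theorem C(i).} You localize and complete the whole module $H^i_{\p R}(M)$, all of whose components are $\p$-torsion. Then Theorem~\ref{ordinal} applies to every component at once, and the rigidity isomorphisms for $V$ over $K'[X_1,\ldots,X_n]$ make $\mu_i(\p,M_{\underline{u}})=\dim_{K'}V_{\underline{u}}$ constant on blocks. This is sound, and it makes explicit a step the paper leaves implicit.
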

\begin{proof}
Note that $a(\underline{u})=\mu_0(\m,N)$ where $N=M_{\underline{u}}$. Since, $M$ is a $\mathbb{Z}^n$-graded $\F$-finite, $\F$-module over $R$, by Theorem \ref{bass numbers,injective dimension, associted primes are constant in block}, the set $\{a(\underline{u})\mid \underline{u}\in \mathbb{Z}^n\}$ is constant 
on $\mathcal{B}(U)$ for each subset $U$ of $\{1, \ldots, n\}$. Given that  $$M_{\underline{u}}\cong E(A/YA)^{a(\underline{u})}\oplus Q(A)^{b(\underline{u})}\oplus A^{c(\underline{u})}.$$
   Let $\overline{M}=M/\Gamma_{\m R}(M)$. Then $\overline{M}_{\underline{u}}=Q(A)^{b(\underline{u})}\oplus A^{c(\underline{u})}$. Since $M$ is $\F$-finite, $\F$-module over $R$, so is $\overline{M}$. Therefore, $H_0(Y,\overline{M})$ is $\F$-finite, $\F$-module over $K[X_1,\ldots,X_n]$ by Theorem \ref{pi take f finite to f finite}. This implies  $H_0(Y,\overline{M})(\underline{-1})$ is straight as $K[X_1,\ldots,X_n]$ by Lemma \ref{M(-1) is staright}. Now 
   $$\overline{M}_{\underline{u}}=Q(A)^{b(\underline{u})}\oplus A^{c(\underline{u})}$$ implies $$Y\overline{M}_{\underline{u}}=Q(A)^{b(\underline{u})}\oplus YA^{c(\underline{u})}.$$
   Hence $\overline{M}_{\underline{u}}/Y\overline{M}_{\underline{u}}=(A/YA)^{c(\underline{u})}$. Therefore, the set $\{c(\underline{u})\mid \underline{u}\in \mathbb{Z}^n\}$ is constant 
on $\mathcal{B}(U)$ for each subset $U$ of $\{1, \ldots, n\}$ by Lemma \ref{iso on each block}. Consider the following exact sequence
    \begin{equation*} 
    0\rightarrow \Gamma_Y(M_{\underline{u}})\rightarrow M_{\underline{u}}\rightarrow \frac{M_{\underline{u}}}{\Gamma_Y(M_{\underline{u}})}\rightarrow 0 .
\end{equation*}
Since $\Gamma_Y(M_{\underline{u}})\otimes_A Q(A)=0$, we get that $M_{\underline{u}}\otimes_A Q(A)\cong M_{\underline{u}}/\Gamma_\pi(M_{\underline{u}})\otimes_A Q(A)\cong  Q(A)^{b(\underline{u})+c(\underline{u})}$.
Since $M$ is $\F$-finite, $\F$-module over $R$, we get that $M\otimes_RQ(A)[X_1,\ldots,X_n]=M\otimes_AQ(A)$ is $\F$-finite, $\F$-module over $Q(A)[X_1,\ldots,X_n]$ by Theorem \ref{pi take f finite to f finite}. Therefore, by Lemma \ref{M(-1) is staright} together with Lemma \ref{iso on each block} proves that the set $\{b(\underline{u})+c(\underline{u})\mid \underline{u}\in \mathbb{Z}^n\}$ is constant 
on $\mathcal{B}(U)$ for each subset $U$ of $\{1, \ldots, n\}$. This implies that the set $\{b(\underline{u})\mid \underline{u}\in \mathbb{Z}^n\}$ is constant 
on $\mathcal{B}(U)$ for each subset $U$ of $\{1, \ldots, n\}$. This completes the proof of the theorem.
\end{proof}

\begin{remark}\normalfont
    Let $R$ be as in Theorem \ref{structure theorem}. Note that by Theorem \ref{bass numbers,injective dimension, associted primes are constant in block}, the set of Bass numbers $\{\mu_i(YA,M_{\underline{u}})\mid \underline{u}\in \mathbb{Z}^n\}$ is constant 
on $\mathcal{B}(U)$. In fact, the Bass numbers in this case depend on the numbers appearing in the previous structure theorem.

Since $0\rightarrow A\xrightarrow{Y} A\rightarrow A/YA\rightarrow 0$ is a free resolution of $A/YA$, $\mu_i(YA, M_{\underline{u}})=0$ for $i\geq 2$. 

Now, $\mu_1(YA, M_{\underline{u}})=\dim_{A/YA}\Ext^1_A\left(A/YA,E(A/(Y))^{a(\underline{u})}\oplus Q(A)^{b(\underline{u})}\oplus A^{c(\underline{u})}\right)=c(\underline{u})$, since we have $\Ext^1_A(A/YA,A)\cong A/YA$ and  $\Ext^1_A(A/YA, E(A/YA))=\Ext^1_A(A/YA, Q(A))=0$.

Similarly, $\mu_0(YA, M_{\underline{u}})=\dim_{A/YA}\Hom_A\left(A/YA,E(A/(Y))^{a(\underline{u})}\oplus Q(A)^{b(\underline{u})}\oplus A^{c(\underline{u})}\right)=a(\underline{u})$, since we have  $\Hom_A(A/YA, E(A/YA))\cong A/YA$ and $\Hom_A(A/YA, Q(A))=\Hom_A(A/YA, A)=0$.

Since by Theorem \ref{numbers are constant}, the sets $\{a(\underline{u})\mid \underline{u}\in \mathbb{Z}^n\}$, $\{c(\underline{u})\mid \underline{u}\in \mathbb{Z}^n\}$ are constant on $\mathcal{B}(U)$, we can conclude from here that the set of Bass numbers $\{\mu_i(YA,M_{\underline{u}})\mid \underline{u}\in \mathbb{Z}^n\}$ is constant 
on $\mathcal{B}(U)$.
\end{remark}
We conclude the article with the following two examples.
\begin{example}\normalfont
Let $A=K[[Y]]$ where $\operatorname{char}(K)=p>0$. We mention two examples from \cite{TS-23}. Although the authors in \cite{TS-23} work in characteristic zero, these examples remain valid in positive characteristic as well. For more details, the readers are referred to \cite[Example 8.1, 8.2]{TS-23}.
    \begin{enumerate}[\rm(i)]
        \item  Let $R=A[X]$ and $I=(YX)$ in $R$. Then, $E_A(A/YA)$ is a direct summand of $H^1_I(R)_{\underline{u}}$ and hence of $\Gamma_Y\left(H^1_I(R)_{\underline{u}}\right)$. This provides an example where $a(\underline{u})\neq 0$.
        \item Let $R=A[X_1,X_2]$ and $I=(YX_1,X_2)$. Let $M:=H^2_I(R)$. For this example there is some $\underline{u}$ for which the torsion-free part of $\overline{M_{\underline{u}}}$ is not finitely generated implying  $b(\underline{u})\neq 0$.
    \end{enumerate}
\end{example}

\bigskip

\noindent {\it Acknowledgements:}
I would like to express my sincere gratitude to Prof. Tony J. Puthenpurakal, my PhD supervisor for his careful reading of the manuscript and valuable feedback.
\medskip

\bibliographystyle{plain}

\end{document}